\numberwithin{equation}{section}
\theoremstyle{definition}
\newtheorem{thm}{Theorem}[section]
\newtheorem{lem}[thm]{Lemma}
\newtheorem{rem}[thm]{Remark}
\newtheorem{assumption}{Assumption}
\begin{document}

\title{\textbf{A Conservation Law Method in Optimization}}

\author{Bin Shi}

\maketitle
\begin{abstract}
We propose some algorithms to find local minima in nonconvex optimization and to obtain global minima in some degree from the Newton Second Law without friction. With the key observation of the velocity observable and controllable in the motion, the algorithms simulate the Newton Second Law without friction based on symplectic Euler scheme. From the intuitive analysis of analytical solution, we give a theoretical analysis for the high-speed convergence in the algorithm proposed. Finally, we propose the experiments for strongly convex function, non-strongly convex function and nonconvex function in high-dimension.

\end{abstract}

\section{Introduction}
\label{sec:introduction}

Non-convex optimization is the dominating algorithmic technique behind many state-of-art results in machine learning, computer vision, natural language processing and reinforcement learning. Finding a global minimizer of a non-convex optimization problem is NP-hard. Instead, the local search method become increasingly important, which is based on the method from convex optimization problem. Formally, the problem of unconstrained  optimization is stated in general terms as that of finding the minimum value that a function attains over Euclidean space, i.e.
$$
\mathop{\text{min}}_{x \in \mathbb{R}^{n}}f(x).
$$
Numerous methods and algorithms have been proposed to solve the minimization problem, notably gradient methods, Newton's methods, trust-region method, ellipsoid method  and interior-point method~\citep{polyak1987introduction,nesterov2013introductory,wright1999numerical,luenberger1984linear,boyd2004convex,bubeck2015convex}.

First-order optimization algorithms are the most popular algorithms to perform optimization and by far the most common way to optimize neural networks, since the second-order information obtained is supremely expensive.  The simplest and earliest method for minimizing a convex function $f$ is the gradient method, i.e.,
\begin{equation}
\label{eqn:gradient}
\left\{\begin{aligned}
& x_{k+1} = x_{k} - h \nabla f(x_{k})\\
& \text{Any Initial Point}: \; x_{0}.
\end{aligned}\right.
\end{equation}
There are two significant improvements of the gradient method to speed up the convergence. One is the momentum method, named as Polyak heavy ball method, first proposed in~\citep{polyak1964some}, i.e.,
\begin{equation}
\label{eqn:momentum}
\left\{\begin{aligned}
& x_{k+1} = x_{k} - h \nabla f(x_{k}) + \gamma_{k} (x_{k} - x_{k-1})\\
& \text{Any Initial Point}: \; x_{0}.
\end{aligned}\right.
\end{equation}
Let $\kappa$ be the condition number, which is the ratio of the smallest eigenvalue and the largest eigenvalue of Hessian at local minima. The momentum method speed up the local convergence rate from $1 - 2\kappa$ to $1 - 2\sqrt{\kappa}$. The other is the notorious Nesterov's accelerated gradient method, first proposed in~\citep{nesterov1983method} and an improved version~\citep{nesterov1988general, nesterov2013introductory}, i.e.
\begin{equation}
\label{eqn:nesterov}
\left\{\begin{aligned}
& y_{k+1} = x_{k} - \frac{1}{L} \nabla f(x_{k}) \\
& x_{k+1} = x_{k} + \gamma_{k}(x_{k+1} - x_{k})\\
& \text{Any Initial Point}: \; x_{0} = y_{0}
\end{aligned}\right.
\end{equation}
where the parameter is set as 
$$
\gamma_{k} = \frac{\alpha_{k} (1 - \alpha_{k})}{\alpha_{k}^{2} + \alpha_{k+1}}\quad \text{and}\quad \alpha_{k+1}^{2} = (1 - \alpha_{k+1})\alpha_{k}^{2} + \alpha_{k+1} \kappa.
$$
The scheme devised by Nesterov does not only own the property of the local convergence for strongly convex function, but also is the global convergence scheme, from $1 - 2\kappa$ to $1 - \sqrt{\kappa}$ for strongly convex function and from $\mathcal{O}\left(\frac{1}{n}\right)$ to $\mathcal{O}\left(\frac{1}{n^{2}}\right)$ for non-strongly convex function.

Although there is the complex algebraic trick in Nesterov's accelerated gradient method, the three methods above can be considered from continuous-time limits~\citep{polyak1964some, su2014differential, wibisono2016variational, wilson2016lyapunov} to obtain physical intuition. In other words, the three methods can be regarded as the discrete scheme for solving the ODE. The gradient method~(\ref{eqn:gradient}) is correspondent to 
\begin{equation}
\label{eqn:gradient_system}
\left\{\begin{aligned}
& \dot{x} = - \nabla f(x_{k})\\
& x(0)= x_{0},
\end{aligned}\right.
\end{equation}
and the momentum method and Nesterov accelerated gradient method are correspondent to 
\begin{equation}
\label{eqn:2nd_system}
\left\{\begin{aligned}
& \ddot{x} + \gamma_{t} \dot{x} + \nabla f(x) = 0\\
& x(0)= x_{0}, \;\dot{x}(0) =0,
\end{aligned}\right.
\end{equation}
the difference of which are the setting of the friction parameter $\gamma_{t}$. There are two significant intuitive physical meaning in the two ODEs~(\ref{eqn:gradient_system}) and~(\ref{eqn:2nd_system}). The ODE~(\ref{eqn:gradient_system}) is the governing equation for potential flow, a correspondent phenomena of waterfall from the height along the gradient direction. The infinitesimal generalization is correspondent to heat conduction in nature. Hence, the gradient method~(\ref{eqn:gradient}) is viewed as the implement in computer or optimization simulating the phenomena in the real nature. The ODE~(\ref{eqn:2nd_system}) is the governing equation for the heavy ball motion with friction.  The infinitesimal generalization is correspondent to chord vibration in nature.  Hence, the momentum method~(\ref{eqn:momentum}) and the Nesterov's accelerated gradient method~(\ref{eqn:nesterov}) are viewed as the update version implement in computer or optimization by use of setting the friction force parameter $\gamma_{t}$.
     
Furthermore, we can view the three methods above as the thought for dissipating energy implemented in the computer. The unknown objective function in black box model can be viewed as the potential energy.  Hence, the initial energy is from the potential function $f(x_{0})$ at any position $x_{0}$ to the minimization value $f(x^{\star})$ at the position $x^{\star}$. The total energy is combined with the kinetic energy and the potential energy. The key observation in this paper is that we find the kinetic energy, or the velocity, is observable and controllable variable in the optimization process. In other words, we can compare the velocities in every step to look for local minimum in the computational process or re-set them to zero to arrive to artificially dissipate energy. 

Let us introduce firstly the governing motion equation in a conservation force field, that we use in this paper, for comparison as below, 
\begin{equation}
\label{eqn:conservation}
\left\{\begin{aligned}
& \ddot{x} = - \nabla f(x) \\
&  x(0)= x_{0}, \;\dot{x}(0) =0. 
\end{aligned}\right.
\end{equation}
The concept of phase space, developed in the late 19th century, usually consists of all possible values of position and momentum variables. The governing motion equation in a conservation force field~(\ref{eqn:conservation}) can be rewritten as
\begin{equation}
\label{eqn:conphase}
\left\{\begin{aligned}
& \dot{x} = v \\
& \dot{v} = -\nabla f(x) \\
&  x(0)= x_{0}, \;v(0) =0 .
\end{aligned}\right.
\end{equation}

In this paper, we implement our discrete strategy with the utility of the observability and controllability of the velocity, or the kinetic energy, as well as artificially dissipating energy for two directions as below,   
\begin{itemize}
\item To look for local minima in non-convex function or global minima in convex function, the kinetic energy, or the norm of the velocity, is compared with that in the previous step,  it will be re-set to zero until it becomes larger no longer. 

\item To look for global minima in non-convex function, an initial larger velocity $v(0) = v_{0}$ is implemented at the any initial position $x(0) = x_{0}$. A ball is implemented with~(\ref{eqn:conphase}), the local maximum of the kinetic energy  is recorded to discern how many local minima exists along the trajectory. Then implementing the strategy above to find the minimum of all the local minima.
\end{itemize}
For implementing our thought in practice, we utilize the scheme in the numerical method for Hamiltonian system, the symplectic Euler method. We remark that a more accuracy version is the St\"{o}rmer-Verlet method for practice. 

\subsection{An Analytical Demonstration For Intuition}
For a simple 1-D function with ill-conditioned Hessian, $f(x) = \frac{1}{200} x^{2}$ with the initial position at $x_{0} = 1000$.  The solution and the function value along the solution for~(\ref{eqn:gradient_system}) are given by
\begin{numcases}{}
\label{eqn:grad1} x(t) = x_{0}e^{-\frac{1}{100}t}      \\
\label{eqn:grad2} f(x(t)) = \frac{1}{200}x_{0}^{2} e^{-\frac{1}{50}t}. 
\end{numcases}
The solution and the function value along the solution for~(\ref{eqn:2nd_system}) with the optimal friction parameter $\gamma_{t} = \frac{1}{5}$ are
\begin{numcases}{}
\label{eqn:mom1} x(t) = x_{0}\left(1 + \frac{1}{10}t \right)e^{-\frac{1}{10}t}      \\
\label{eqn:mom2} f(x(t)) = \frac{1}{200}x_{0}^{2} \left(1 + \frac{1}{10}t \right)^{2} e^{-\frac{1}{5}t}.
\end{numcases}
The solution and the function value along the solution for~(\ref{eqn:conphase}) are
\begin{numcases}{}
\label{eqn:con1} x(t) = x_{0}\cos\left(\frac{1}{10}t\right)\quad \text{and}\quad  v(t) = x_{0}\sin\left(\frac{1}{10}t\right)     \\
\label{eqn:con2} f(x(t)) = \frac{1}{200}x_{0}^{2} \cos^{2}\left(\frac{1}{10}t\right)
\end{numcases}
stop at the point that $|v|$ arrive maximum. Combined with~(\ref{eqn:grad2}),~(\ref{eqn:mom2}) and~(\ref{eqn:con2}) with stop at the point that $|v|$ arrive maximum, the function value approximating $f(x^{\star})$ are shown as below,
\begin{figure}[H]
\begin{minipage}[t]{0.5\linewidth}
\centering
\includegraphics[width=3.2in]{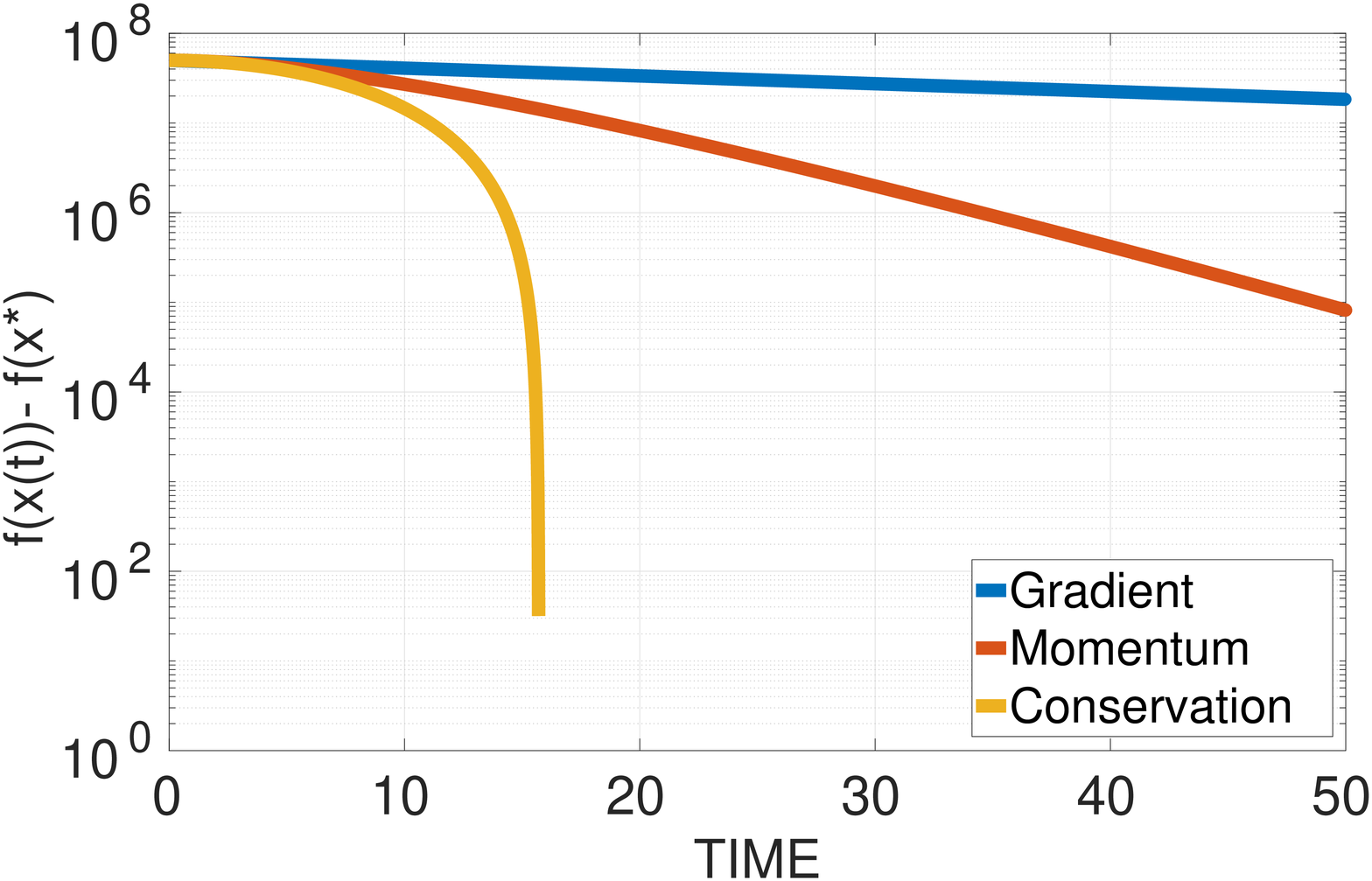}
\label{fig:side:a}
\end{minipage}%
\begin{minipage}[t]{0.5\linewidth}
\centering
\includegraphics[width=3.2in]{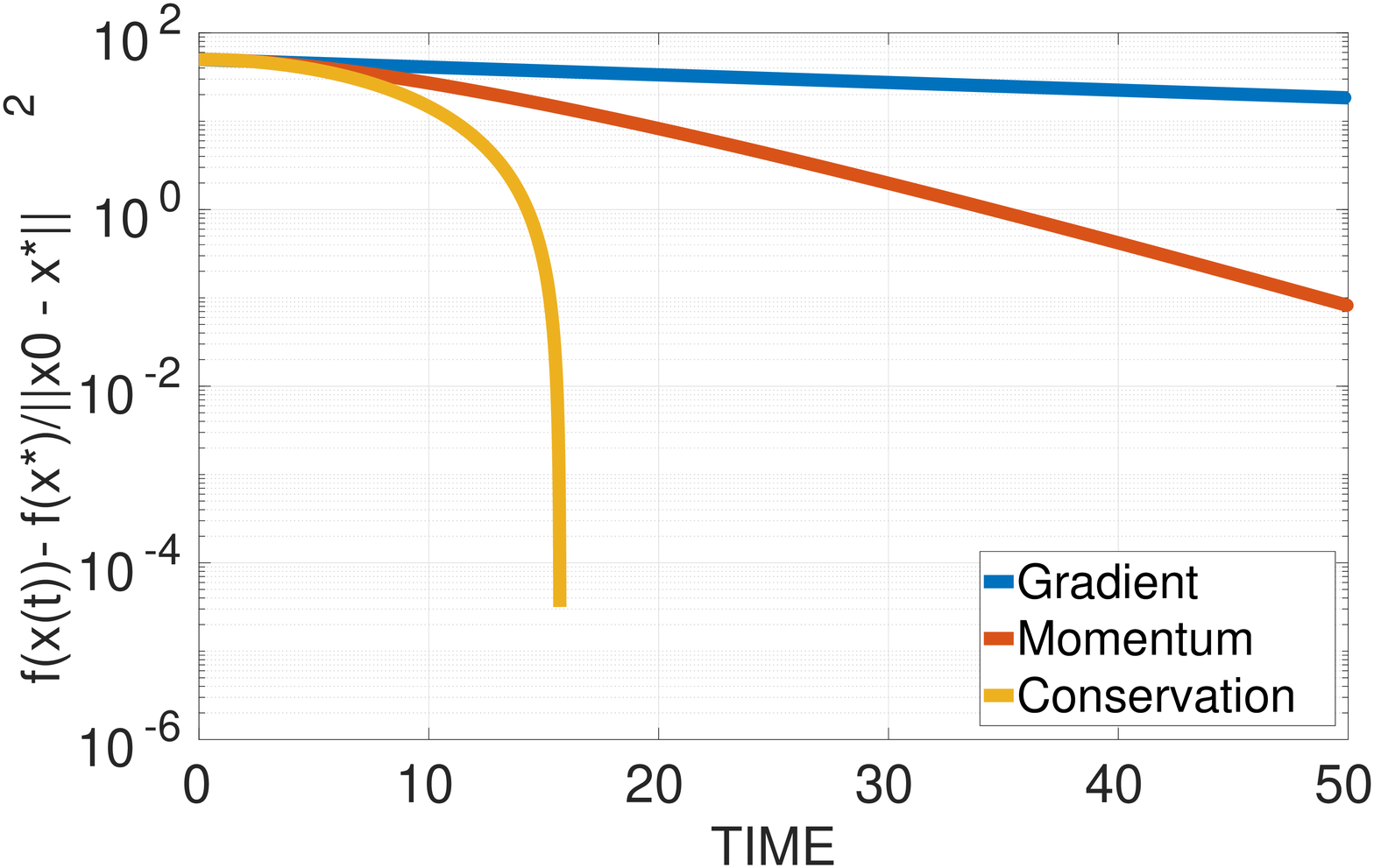}
\label{fig:side:b}
\end{minipage}
\caption{Minimizing $f(x) = \frac{1}{200}x^{2}$ by the analytical solution for~(\ref{eqn:grad2}),~(\ref{eqn:mom2}) and~(\ref{eqn:con2}) with stop at the point that $|v|$ arrive maximum, starting from $x_{0} = 1000$ and the numerical step size $\Delta t =0.01$.} 
\label{fig:analyse}
\end{figure}
From the analytical solution for local convex quadratic function with maximum eigenvalue $L$ and minimum eigenvalue $\mu$, in general, the step size by $\frac{1}{\sqrt{L}}$ for momentum method and Nesterov accelerated gradient method, hence the simple estimate for iterative times is approximately 
$$
n \sim \frac{\pi}{2}\sqrt{\frac{L}{\mu}}.
$$ 
hence, the iterative times $n$ is proportional to the reciprocal of the square root of minimal eigenvalue $\sqrt{\mu}$, which is essentially different from the convergence rate of the gradient method and momentum method.

The rest of the paper is organized as follows. Section~\ref{sec:related_work} summarize relevant existing works. In Section~\ref{sec:symplectic}, we propose the artificially dissipating energy algorithm, energy conservation algorithm and the combined algorithm based on the symplectic Euler scheme, and remark a second-order scheme --- the St\"{o}rmer-Verlet scheme . In Section~\ref{sec:local}, we propose the locally theoretical analysis for High-Speed converegnce. In section~\ref{sec:experimental}, we propose the experimental result for the proposed algorithms on strongly convex function, non-strongly convex function and nonconvex function in high-dimension. Section~\ref{sec:conclusion} proposes some perspective view for the proposed algorithms and two adventurous ideas based on the evolution of Newton Second Law --- fluid and quantum.

\section{Related Work}
\label{sec:related_work}

The history of gradient method for convex optimization can be back to the time of Euler and Lagrange. However, since it is relatively cheaper to only calculate for first-order information, this simplest and earliest method is still active in machine learning and nonconvex optimization, such as the recent work~\citep{ge2015escaping, anandkumar2016efficient, lee2016gradient, hardt2016gradient}. The natural speedup algorithms are the momentum method first proposed in~\citep{polyak1964some} and Nesterov accelerated gradient method first proposed in~\citep{nesterov1983method} and an improved version~\citep{nesterov1988general}. A acceleration algorithm similar as Nesterov accelerated gradient method, named as FISTA, is designed to solve composition problems~\citep{beck2009fast}.  A related comprehensive work is proposed in~\citep{bubeck2015convex}.  

The original momentum method, named as Polyak heavy ball method, is from the view of ODE in~\citep{polyak1964some}, which contains extremely rich physical intuitive ideas and mathematical theory. An extremely important work in application on machine learning is the backpropagation learning with momentum~\citep{rumelhart1988learning}. Based on the thought of ODE, a lot of understanding and application on the momentum method and Nesterov accelerated gradient methods have been proposed. In~\citep{sutskever2013importance}, a well-designed random initialization with momentum parameter algorithm is proposed to train both DNNs and RNNs. A seminal deep insight from ODE to understand the intuition behind Nesterov scheme is proposed in~\citep{su2014differential}. The understanding for momentum method based on the variation perspective is proposed on~\citep{wibisono2016variational}, and the understanding from Lyaponuv analysis is proposed in~\citep{wilson2016lyapunov}.  From the stability theorem of ODE, the gradient method always converges to local minima in the sense of almost everywhere is proposed in~\citep{lee2016gradient}.  Analyzing and designing iterative optimization algorithms built on integral quadratic constraints from robust control theory is proposed in~\citep{lessard2016analysis}. 

Actually the ``high momentum'' phenomenon has been firstly observed in~\citep{o2015adaptive} for a restarting adaptive accelerating algorithm, and also the restarting scheme is proposed by~\citep{su2014differential}. However, both works above utilize restarting scheme for an auxiliary tool to accelerate the algorithm based on friction. With the concept of phase space in mechanics, we observe that the kinetic energy, or velocity, is controllable and utilizable parameter to find the local minima. Without friction term, we can still find the local minima only by the velocity parameter.  Based on this view, the algorithm is proposed very easy to practice and propose the theoretical analysis. Meanwhile, the thought can be generalized to nonconvex optimization to detect local minima along the trajectory of the particle.

\section{Symplectic Scheme and Algorithms}
\label{sec:symplectic}
In this chapter, we utilize the first-order symplectic Euler scheme from numerically solving Hamiltonian system as below
\begin{equation}
\label{eqn:symplectic_euler}
\left\{\begin{aligned}
         & x_{k+1} = x_{k} + h v_{k+1} \\
         & v_{k+1} = v_{k} - h \nabla f(x_{k})
        \end{aligned}\right.
\end{equation}
to propose the corresponding artifically dissipating energy algorithm to find the global minima for convex function, or local minima in non-convex function. Then by the observability of the velocity, we propose the energy conservation algorithm for detecting local minima along the trajectory. Finally, we propose a combined algorithm to find better local minima between some local minima.
\begin{rem}
In all the algorithms below, the symplectic Euler scheme can be taken place by the St\"{o}rmer-Verlet scheme, i.e.  
\begin{equation}
\label{eqn:symplectic_vertex}
\left\{\begin{aligned}
        & v_{k+1/2} = v_{k} - \frac{h}{2}\nabla f(x_{k}) \\
        & x_{k+1} = x_{k} + h v_{k + 1/2} \\
        & v_{k+1} = v_{k + 1/2} - \frac{h}{2} \nabla f(x_{k+1})
        \end{aligned}\right. 
\end{equation}
which works perfectly better than the symplectic scheme even if doubling step size and keep the left-right symmetry of the Hamiltonian system. The St\"{o}rmer-Verlet scheme is the natural discretization for 2nd-order ODE
\begin{equation}
\label{eqn:discrete}
x_{k+1} - 2x_{k} + x_{k-1} = -h^{2}\nabla f(x_{k})
\end{equation}
which is named as leap-frog scheme in PDEs. We remark that the discrete scheme~(\ref{eqn:discrete}) is different from the finite difference approximation by the forward Euler method to analyze the stability of 2nd ODE in~\citep{su2014differential}, since the momentum term is biased.  
\end{rem}
\subsection{The Artifically Dissipating Energy Algorithm}
Firstly, the artificially dissipating energy algorithm based on~(\ref{eqn:symplectic_euler}) is proposed as below.
\begin{algorithm}[H]
\begin{algorithmic}[1]
\State{Given a starting point $x_{0} \in \mathbf{dom}(f)$}
\State{Initialize the step length $h$, $\text{maxiter}$, and the velocity variable $v_{0} = 0$}
\State{Initialize the iterative variable $v_{iter} = v_{0}$}
\While{$\|\nabla f(x)\| > \epsilon$\;and\; $k < \text{maxiter}$}
\State{Compute $v_{iter}$ from the below equation in~(\ref{eqn:symplectic_euler})}
\If{$\|v_{iter}\| \leq \|v\|$}
         \State{$v = 0$}
      \Else
         \State{$v = v_{iter}$}
\EndIf
  
\State{Compute $x$ from the above equation in~(\ref{eqn:symplectic_euler})}
\State{$x_k = x$;}
\State{$f(x_{k}) = f(x)$;}
\State{$k = k + 1$;}
\EndWhile
\end{algorithmic}
\caption{Artifically Dissipating Energy Algorithm}
\label{alg:local}
\end{algorithm}

\begin{rem}
In the actual algorithm~\ref{alg:local}, the codes in line $15$ and $16$ are not need in the while loop in order to  speed up the computation.
\end{rem}

\subsubsection{A Simple Example For Illustration}
Here, we use a simple convex quadratic function with ill-conditioned eigenvalue for illustration as below,
\begin{equation}
\label{eqn:artifical1}
f(x_1, x_{2}) = \frac{1}{2}\left(x_{1}^{2} + \alpha x_{2}^{2}\right),
\end{equation}
of which the maximum eigenvalue is $L=1$ and the minimum eigenvalue is $\mu = \alpha$.  Hence the scale of the step size for~(\ref{eqn:artifical1}) is 
$$
\frac{1}{L} = \sqrt{\frac{1}{L}} = 1.
$$
In figure~\ref{fig:common}, we demonstrate the convergence rate of gradient method, momentum method, Nesterov accelerated gradient method and artifically dissipating energy  method with the common step size $h = 0.1$ and $h = 0.5$, where the optimal friction parameter for momentum method and Nesterov accelerated gradient method $\gamma = \frac{1-\sqrt{\alpha}}{1+\sqrt{\alpha}}$ with $\alpha = 10^{-5}$.  A further result for comparison with the optimal step size in gradient method $h = \frac{2}{1+\alpha}$, the momentum method $h = \frac{4}{(1+\sqrt{\alpha})^{2}}$, and Nesterov accelerated gradient method with $h=1$ and the artifically disspating energy  method with $h=0.5$ shown in figure~\ref{fig:diff}.
\begin{figure}[H]
\begin{minipage}[t]{0.5\linewidth}
\centering
\includegraphics[width=3.2in]{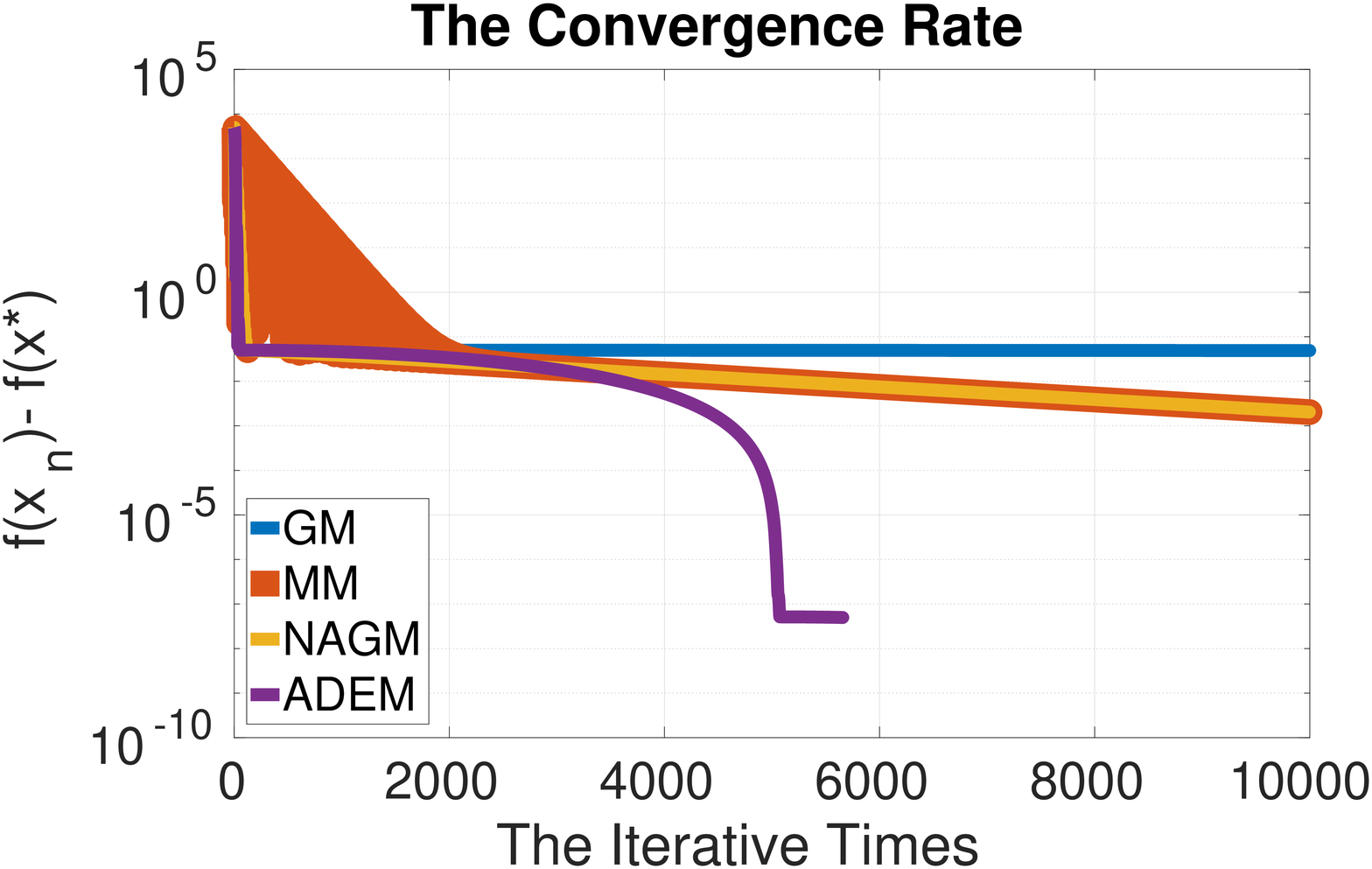}
\end{minipage}%
\begin{minipage}[t]{0.5\linewidth}
\centering
\includegraphics[width=3.2in]{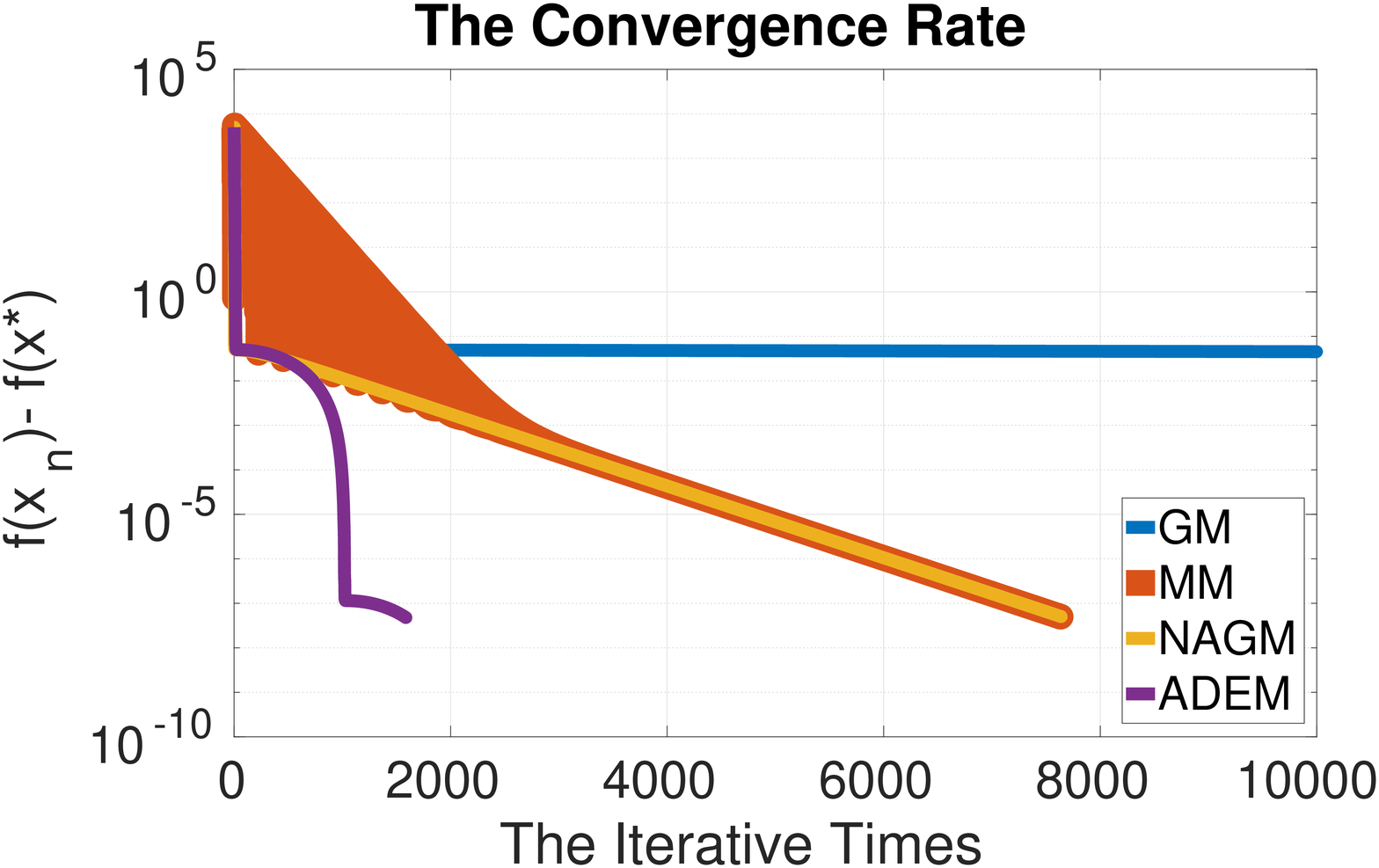}
\end{minipage}
\caption{Mimimize the function in~(\ref{eqn:artifical1}) for artificially dissipating energy algorithm comparing with gradient method, momentum method and Nesterov accelerated gradient method with stop criteria $\epsilon = 1e-6$. The Step size: Left: $h=0.1$; Right: $h=0.5$.} 
\label{fig:common}
\end{figure}

\begin{figure}[H]
\begin{minipage}[t]{0.5\linewidth}
\centering
\includegraphics[width=3.2in]{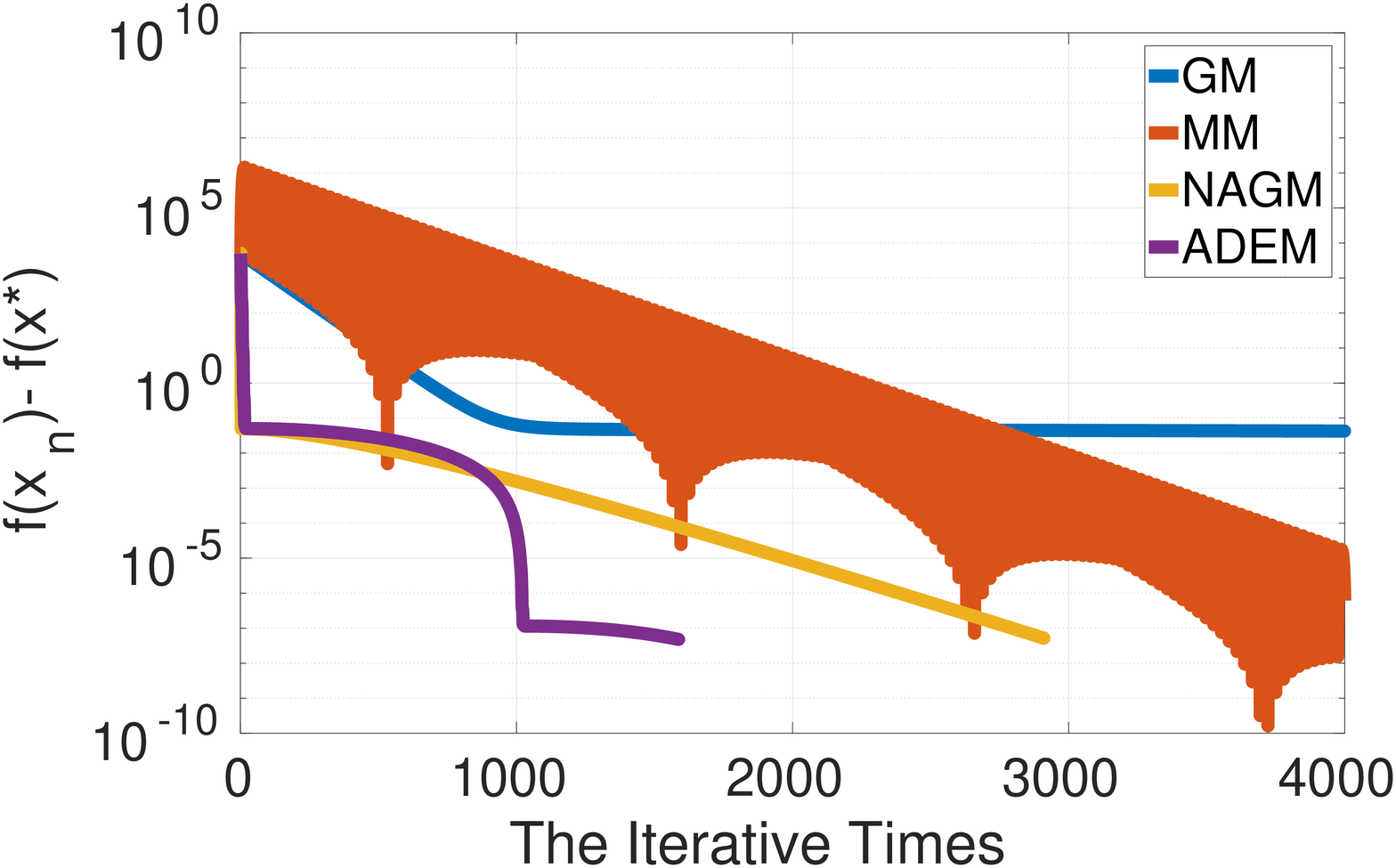}
\end{minipage}%
\begin{minipage}[t]{0.5\linewidth}
\centering
\includegraphics[width=3.2in]{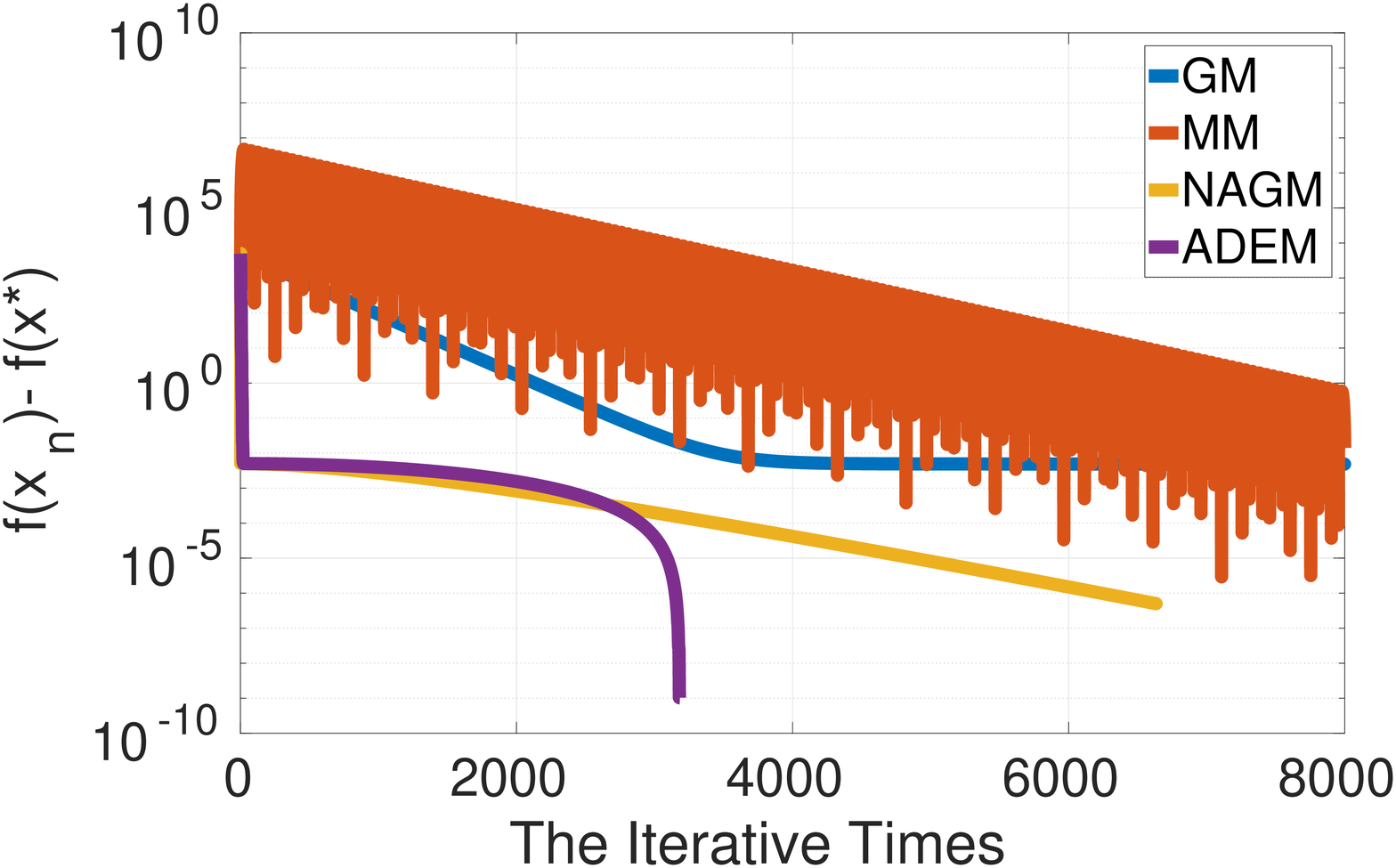}
\end{minipage}
\caption{Mimimize the function in~(\ref{eqn:artifical1}) for artificially dissipating energy algorithm comparing with gradient method, momentum method and Nesterov accelerated gradient method with stop criteria $\epsilon = 1e-6$. The Coefficient $\alpha$: Left: $\alpha=10^{-5}$; Right: $\alpha=10^{-6}$.} 
\label{fig:diff}
\end{figure}

With the illustrative convergence rate, we need to learn the trajectory. Since the trajectories of all the four methods are so narrow in ill-condition function in~(\ref{eqn:artifical1}), we use a relatively good-conditioned function to show it as $\alpha = \frac{1}{10}$ in figure~\ref{fig:trajectory}.  
\begin{figure}[H]
\centering
\includegraphics[width=3.2in]{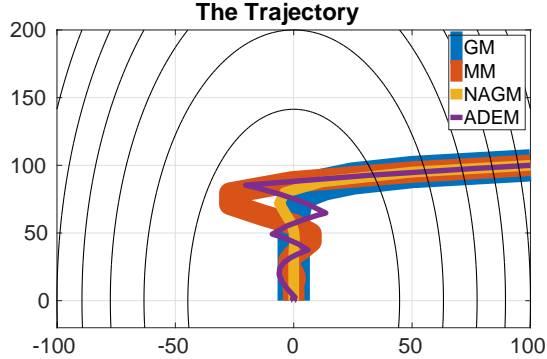}
\caption{The trajectory for gradient method, momentum method, Nesterov accelerated method and artifically dissipating energy method for the function~(\ref{eqn:artifical1}) with $\alpha = 0.1$.} 
\label{fig:trajectory}
\end{figure}
A clear fact in figure~\ref{fig:trajectory} shows that the gradient correction decrease the oscillation to comparing with momentum method. A more clear observation is that artificially dissipating method owns the same property with the other three method by the law of nature, that is, if the trajectory come into the local minima in one dimension will not leave it very far. However, from figure~\ref{fig:common} and figure~\ref{fig:diff}, the more rapid convergence rate from artificially dissipating energy method has been shown. 
\subsection{Energy Conservation Algorithm For Detecting Local Minima}
Here,  the energy conservation algorithm based on~(\ref{eqn:symplectic_euler}) is proposed as below.
\begin{algorithm}[H]
\begin{algorithmic}[1]
\State{Given a starting point $x_{0} \in \mathbf{dom}(f)$}
\State{Initialize the step size $h$ and the maxiter}
\State{Initialize the velocity $v_{0}>0$ and compute $f(x_{0})$}
\State{Compute the velocity $x_1$ and $v_{1}$ from the equation~(\ref{eqn:symplectic_euler}), and compute $f(x_1)$ }
\For{$k= 1: n$}
      \State{Compute $x_{k+1}$ and $v_{k+1}$ from~(\ref{eqn:symplectic_euler})}
      \State{Compute $f(x_{k+1})$}
      \If{$\|v_{k}\| \geq \|v_{k+1}\|$ and $\|v_{k}\| \geq \|v_{k-1}\|$ }
      \State{Record the position $x_{k}$}
      \EndIf
\EndFor 
\end{algorithmic}
\caption{Energy Conservation Algorithm}
\label{alg:global}
\end{algorithm}
\begin{rem}
In the algorithm~\ref{alg:global}, we can set $v_{0} > 0$ such that the total energy large enough to climb up some high peak. Same as the algorithm~\ref{alg:local}, the function value $f(x)$ is not need in the while loop in order to  speed up the computation.
\end{rem}

\subsubsection{The Simple Example For Illustration}
Here, we use the non-convex function for illustration as below,
\begin{equation}
\label{eqn:noncon1}
f(x) = \left\{\begin{aligned}
                 & 2\cos(x),     && x \in [0,2\pi]      \\
                 & \cos(x) + 1, && x \in [2\pi, 4\pi]  \\
                 & 3\cos(x) -1, && x \in [4\pi, 6\pi]  
                 \end{aligned}\right.
\end{equation}
which is the 2nd-order smooth function but not 3rd-order smooth. The maximum eigenvalue can be calculated as below
$$
\max\limits_{x\in [0,6\pi]} |f''(x)| = 3.
$$
then, the step length is set $h \sim \sqrt{\frac{1}{L}}$. We illustrate that the algorithm~\ref{alg:global} simulate the trajectory and find the local minima in figure~\ref{fig:noncon1}. 
\begin{figure}[H]
\begin{minipage}[t]{0.5\linewidth}
\centering
\includegraphics[width=3.2in]{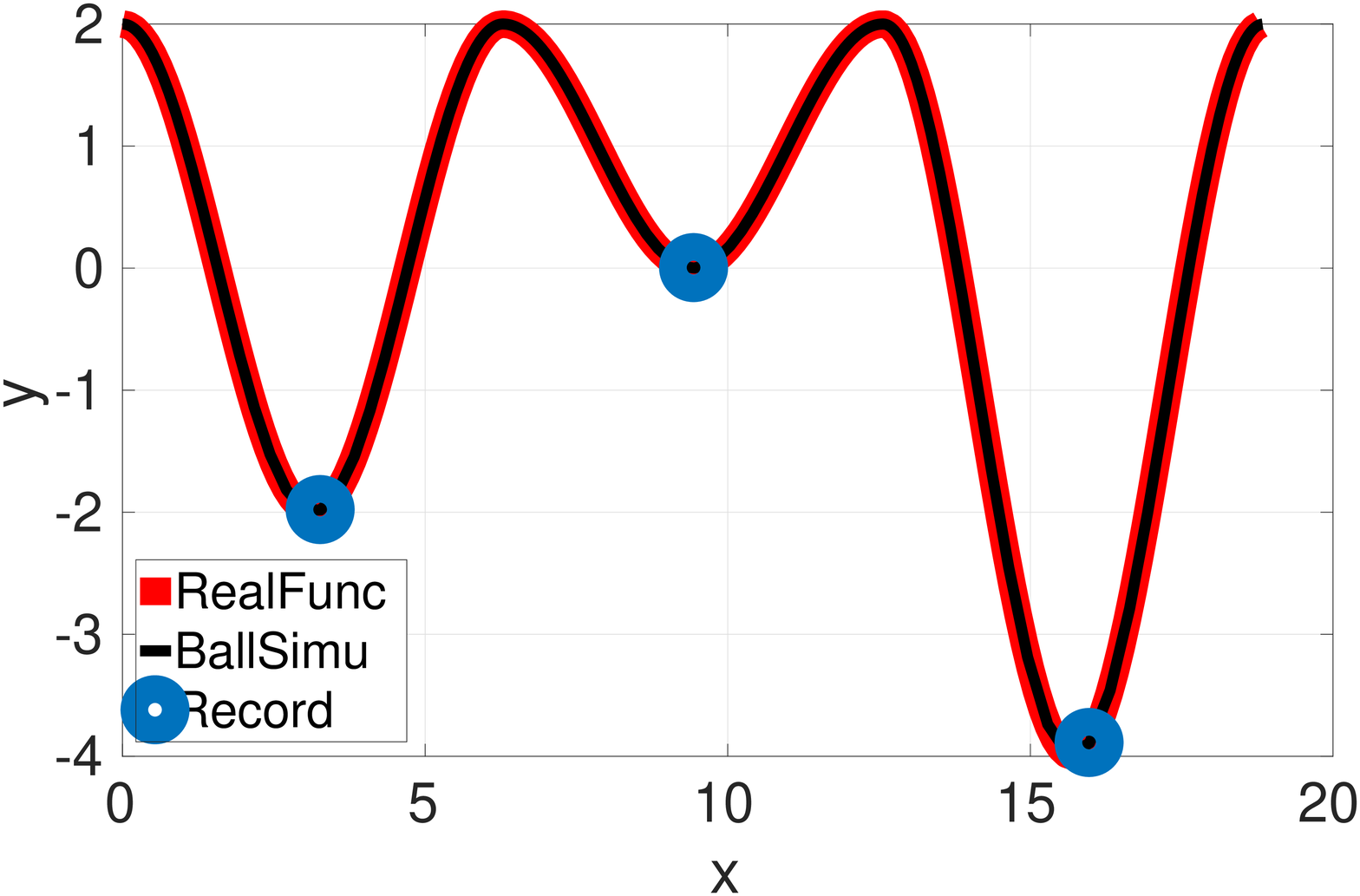}
\end{minipage}%
\begin{minipage}[t]{0.5\linewidth}
\centering
\includegraphics[width=3.2in]{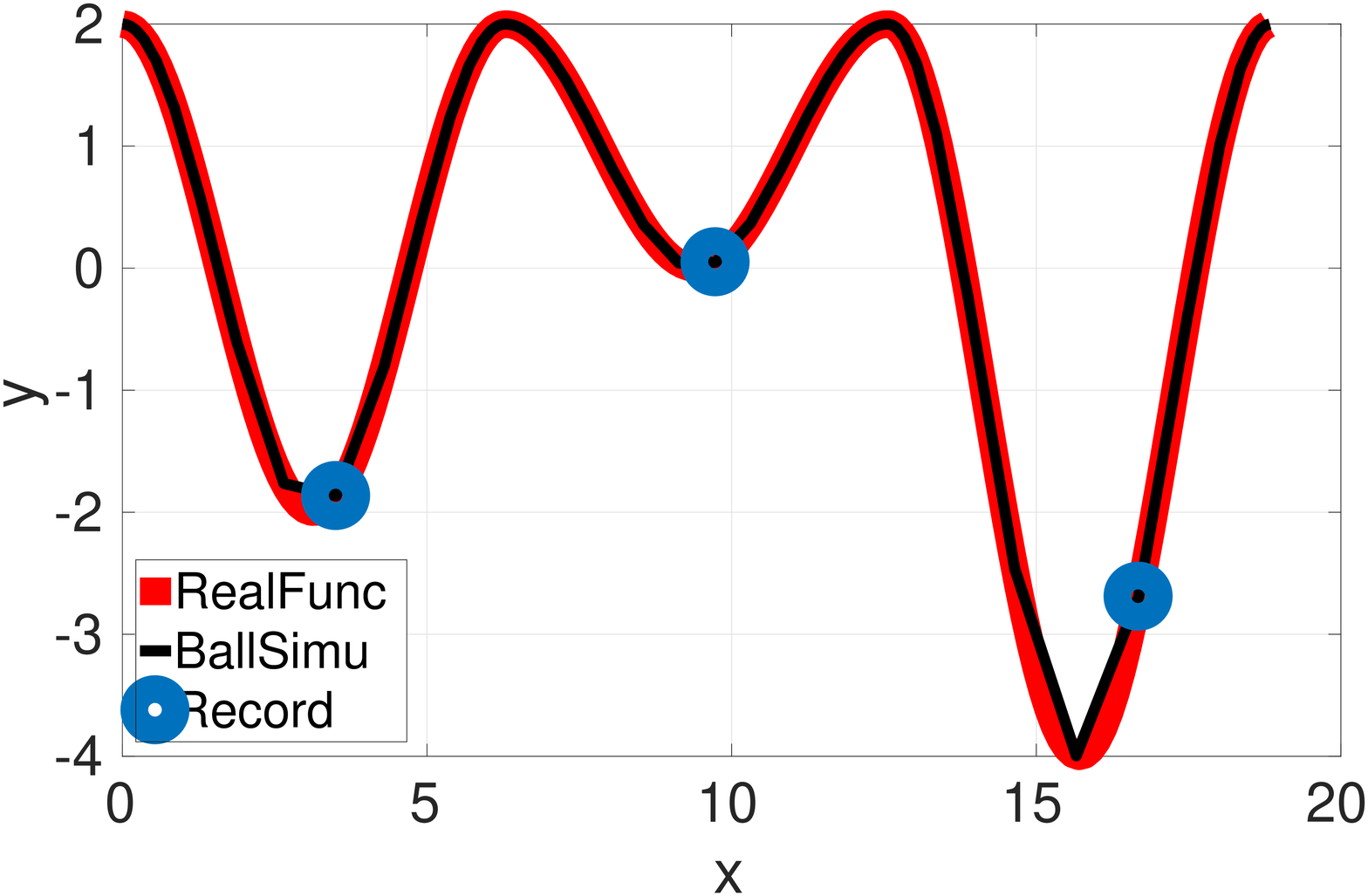}
\end{minipage}
\caption{The Left: the step size $h=0.1$ with $180$ iterative times. The Right: the step size $h = 0.3$ with $61$ iterative times.} 
\label{fig:noncon1}
\end{figure}

Another 2D potential function is shown as below,
\begin{equation}
\label{eqn:noncon2}
f(x_1,x_2) = \frac{1}{2} \left[\left(x_1 - 4\right)^{2} + \left(x_2 - 4\right)^{2} + 8\sin(x_1 + 2x_2)\right].
\end{equation}
which is the  smooth function with domain in $(x_{1},x_{2}) \in [0,8]\times [0,8]$. The maximum eigenvalue can be calculated as below
$$
\max\limits_{x\in [0,6\pi]} |\lambda(f''(x))| \geq 16.
$$
then, the step length is set $h \sim \sqrt{\frac{1}{L}}$. We illustrate that the algorithm~\ref{alg:global} simulate the trajectory and find the local minima in figure~\ref{fig:noncon2}. 

\begin{figure}[H]
\begin{minipage}[t]{0.5\linewidth}
\centering
\includegraphics[width=3.2in]{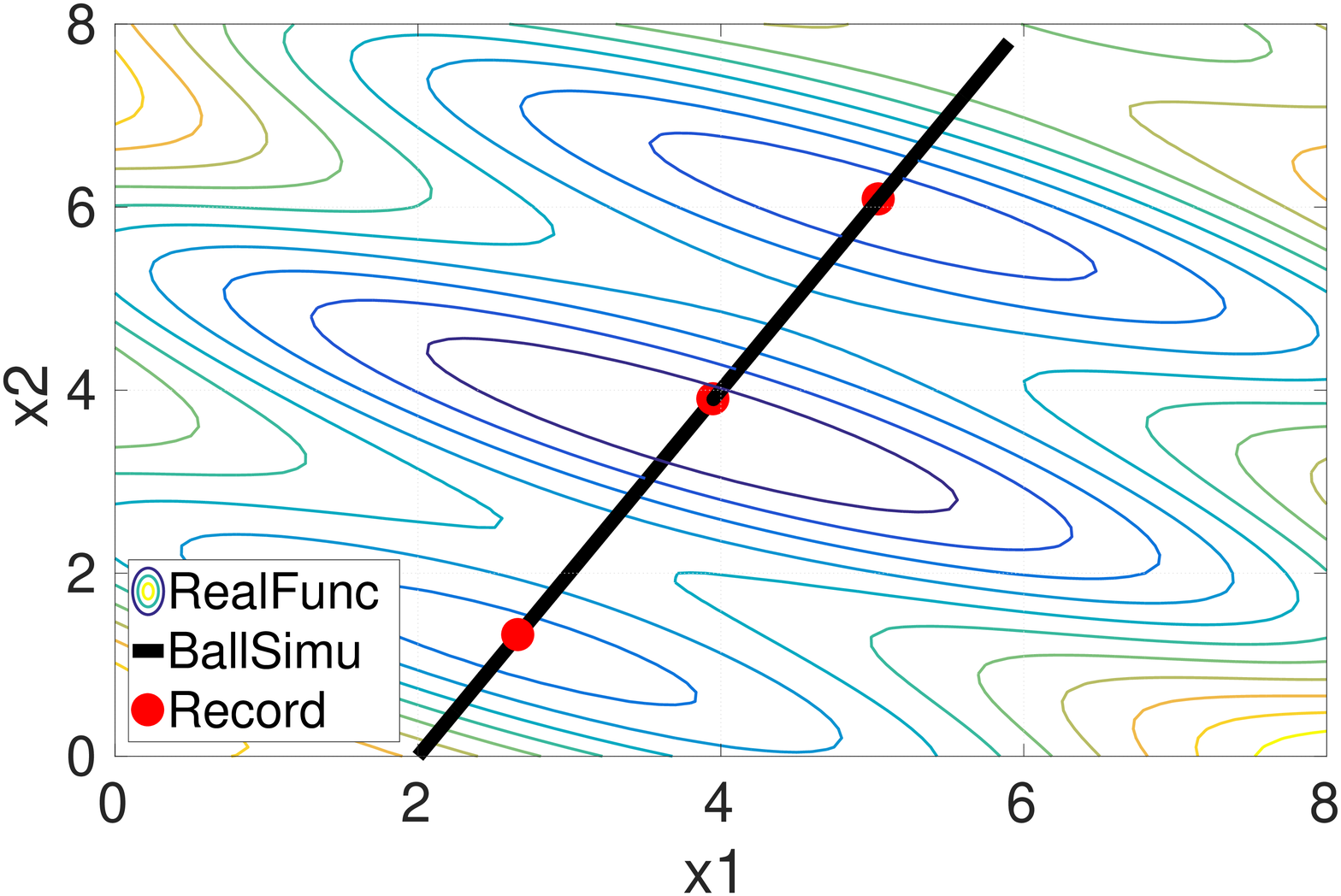}
\end{minipage}%
\begin{minipage}[t]{0.5\linewidth}
\centering
\includegraphics[width=3.2in]{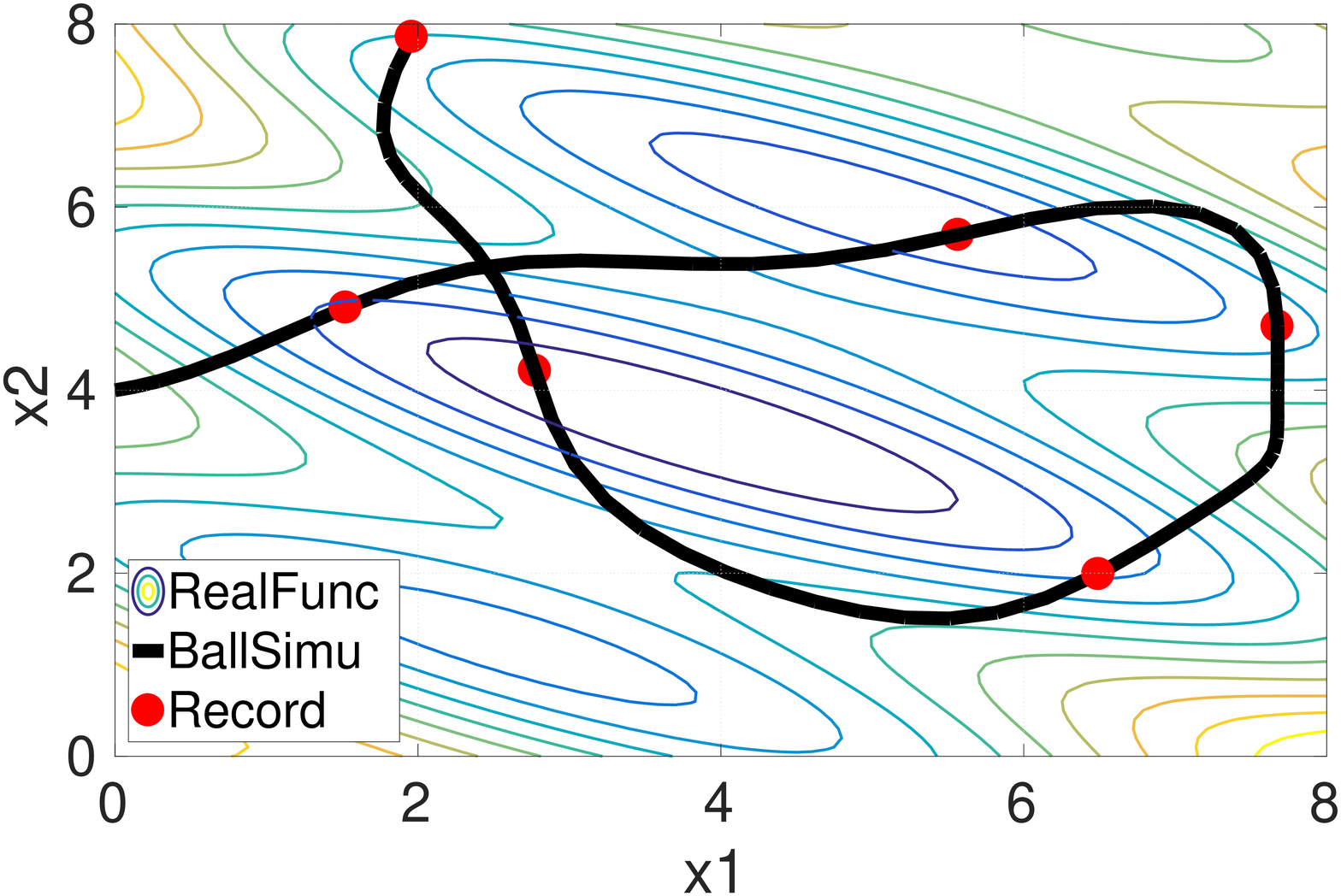}
\end{minipage}
\caption{The common step size is set $h = 0.1$. The Left: the position at $(2,0)$ with $23$ iterative times. The Right: the position at $(0,4)$ with $62$ iterative times.} 
\label{fig:noncon2}
\end{figure}
\begin{rem}
We point out that for the energy conservation algorithm for detecting local minima along the trajectory cannot detect saddle point in the sense of almost everywhere, since the saddle point in original function $f(x)$ is also a saddle point for the energy function $H(x,v) = \frac{1}{2}\|v\|^{2} + f(x)$. The proof process is fully the same in~\citep{lee2016gradient}. 
\end{rem}
\subsection{Combined Algorithm}
Finally, we propose the comprehensive algorithm combining the artificially dissipating energy algorithm (algorithm~\ref{alg:local}) and the energy conservation algorithm (algorithm~\ref{alg:global}) to find global minima.
\begin{algorithm}[H]
\begin{algorithmic}[1]
\State{Given some starting points $x_{0,i} \in \mathbf{dom}(f)$ with $i = 1, \ldots,n$}
\State{Implement algorithm~\ref{alg:global} detecting the position there exists local minima, noted as $x_{j}$ with $j = 1, \ldots,m$}
\State{Implement algorithm~\ref{alg:local} from the result on line $2$ finding the local minima, noted as $x_{k}$ with $k=1,\ldots, l$}
\State{Comparison of $f(x_{k})$ with $k=1,\ldots,l$ to find global minima.}
\end{algorithmic}
\caption{Combined Algorithm}
\label{alg:combined}
\end{algorithm}
\begin{rem}
We remark that the combined algorithm (algorithm~\ref{alg:combined}) cannot guarantee to find global minima if the initial position is not ergodic. The tracking local minima is dependent on the trajectory. However, the time of computation and precision based on the proposed algorithm is far better than the large sampled gradient method. Our proposed algorithm first makes the global minima found become possible. 
\end{rem}

\section{An Asymptotic Analysis for The Phenomena of Local High-Speed Convergence}
\label{sec:local}
In this section, we analyze the phenomena of high-speed convergence shown in figure~\ref{fig:analyse}, figure~\ref{fig:common} and figure~\ref{fig:diff}. Without loss of generality, we use the translate transformation $y_{k} = x_{k} -x^{\star}$ ($x^{\star}$ is the point of local minima) and $v_{k}=v_{k}$ into~(\ref{eqn:symplectic_euler}), shown as below,
\begin{equation}
\label{eqn:equiv_sym}
\left\{
\begin{aligned}
&y_{k+1} = y_{k} + hv_{k+1} \\
&v_{k+1} = v_{k} - h\nabla f(x^{\star} + y_{k}),
\end{aligned}
\right.
\end{equation}
the locally linearized scheme of which is given as below,
\begin{equation}
\label{eqn:equiv_linear}
\left\{
\begin{aligned}
&y_{k+1} = y_{k} + hv_{k+1} \\
&v_{k+1} = v_{k} - h\nabla^{2} f(x^{\star})y_{k}.
\end{aligned}
\right.
\end{equation}
\begin{rem}
The local linearized analysis is based on the stability theorem in finite dimension, the invariant stable manifold theorem and Hartman-Grobman linearized map theorem~\citep{hartman1982ordinary}. The thought is firstly used in~\citep{polyak1964some} to estimate the local convergence of momentum method. And in the paper~\citep{lee2016gradient}, the thought is used to exclude the possiblity to converegnce to saddle point. However, the two theorems above belong to the qualitative theorem of ODE. Hence, the linearized scheme~(\ref{eqn:equiv_linear}) is only an approximate estimate for the original scheme~(\ref{eqn:equiv_sym}) locally.
\end{rem}
\subsection{Some Lemmas For The Linearized Scheme}
Let $A$ be the positive-semidefinite and symmetric matrix to represent $\nabla^{2}f(x^{\star})$ in~(\ref{eqn:equiv_linear}).
 
\begin{lem}
\label{lem:equiv} The numerical scheme, shown as below
\begin{equation}
\label{eqn:equiv}
\begin{pmatrix}
       y_{k+1} \\
       v_{k+1}  
\end{pmatrix} = \begin{pmatrix}
       I -h^{2}A & hI \\
       -hA       & I 
\end{pmatrix}\begin{pmatrix}
       y_{k} \\
       v_{k}  
\end{pmatrix}
\end{equation}
is equivalent to the linearized symplectic-Euler scheme~(\ref{eqn:equiv_linear}), where we note that the linear transformation is 
\begin{equation}
\label{eqn:equivmatrix}
M
=\begin{pmatrix}
       I -h^{2}A & hI \\
       -hA       & I 
\end{pmatrix}.
\end{equation}
\end{lem}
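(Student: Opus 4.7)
The plan is to prove the identity by direct elimination of the implicit coupling in the symplectic Euler scheme (\ref{eqn:equiv_linear}). The key observation is that although the first equation reads $y_{k+1} = y_k + h v_{k+1}$ and therefore looks implicit, the update for $v_{k+1}$ in the second equation depends only on the \emph{old} iterate $y_k$ (not on $y_{k+1}$). Consequently the system is in fact explicit once we update the velocity first, and I would process the two equations in that order.

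First, I would take the second equation $v_{k+1} = v_k - h A y_k$ at face value; reading off the coefficients of $y_k$ and $v_k$ already yields the bottom block row $(-hA,\, I)$ of the matrix $M$ in (\ref{eqn:equivmatrix}). Next, I would substitute this expression for $v_{k+1}$ into $y_{k+1} = y_k + h v_{k+1}$, obtaining
$$
y_{k+1} \;=\; y_k + h\bigl(v_k - h A y_k\bigr) \;=\; (I - h^{2} A)\,y_k + h\,v_k,
$$
so the top block row of $M$ is $(I - h^{2} A,\, hI)$. Stacking the two rows into a single block equation produces exactly (\ref{eqn:equiv}), and the same algebra run in reverse recovers (\ref{eqn:equiv_linear}) from the matrix form, giving the equivalence.

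There is no real obstacle here; the lemma is a bookkeeping identity that packages one linearized symplectic-Euler step as multiplication by a fixed $2n \times 2n$ block matrix $M$. The only subtlety worth flagging in the write-up is the asymmetric appearance of $h$ versus $h^{2}$ in the top row, which comes from the implicit-on-velocity character of the scheme and distinguishes $M$ from the matrix one would obtain by a fully explicit (forward Euler) discretization of (\ref{eqn:conphase}). The payoff is that the subsequent analysis of Section~\ref{sec:local} reduces to a spectral question about $M$: since $A = \nabla^{2} f(x^\star)$ is symmetric and positive semidefinite, one can diagonalize $A$ by an orthonormal basis and simultaneously block-diagonalize $M$ into $2\times 2$ blocks indexed by the eigenvalues of $A$, which is the natural route to explain the high-speed convergence observed in Figure~\ref{fig:analyse}.
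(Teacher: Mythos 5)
Your proof is correct and is essentially the same computation as the paper's: the paper writes the implicit scheme as $\bigl(\begin{smallmatrix} I & -hI \\ 0 & I\end{smallmatrix}\bigr)\bigl(\begin{smallmatrix} y_{k+1} \\ v_{k+1}\end{smallmatrix}\bigr) = \bigl(\begin{smallmatrix} I & 0 \\ -hA & I\end{smallmatrix}\bigr)\bigl(\begin{smallmatrix} y_{k} \\ v_{k}\end{smallmatrix}\bigr)$ and inverts the block-triangular matrix on the left, which is exactly your substitution of $v_{k+1}=v_k-hAy_k$ into $y_{k+1}=y_k+hv_{k+1}$ written in matrix form. No gap; both yield $M=\bigl(\begin{smallmatrix} I-h^2A & hI \\ -hA & I\end{smallmatrix}\bigr)$.
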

\begin{proof}
$$
\begin{aligned}
&\begin{pmatrix}
I & -hI \\
0 &   I
\end{pmatrix}\begin{pmatrix}
       y_{k+1} \\
       v_{k+1}  
\end{pmatrix} = \begin{pmatrix}
I   &   0 \\
-hA &   I
\end{pmatrix}\begin{pmatrix}
       y_{k} \\
       v_{k}  
\end{pmatrix}
\Leftrightarrow \begin{pmatrix}
       y_{k+1} \\
       v_{k+1}  
\end{pmatrix} = \begin{pmatrix}
I-h^{2}A   &   hI \\
-hA &   I
\end{pmatrix}\begin{pmatrix}
       y_{k} \\
       v_{k}  
\end{pmatrix}
 \end{aligned}
$$
\end{proof}

\begin{lem}
\label{lem:symmetric}
For every $2n\times 2n$ matrix $M$ in~(\ref{eqn:equivmatrix}), there exists the orthogonal transformation $U_{2n\times 2n}$ such that the matrix $M$ is similar as below
\begin{equation}
\label{eqn:equiveigen}
U^{T}MU = \begin{pmatrix} T_{1} & & & \\ &T_{2} & & \\ & &\ddots & \\ & & & T_{n} \end{pmatrix} 
\end{equation}
where $T_{i}$ $(i=1,\ldots, n)$ is $2\times 2$ matrix with the form 
\begin{equation}
\label{eqn:2by2}
T_{i} = \begin{pmatrix}
        1-\omega_{i}^{2}h^{2} & h \\
        -\omega_{i}^{2}h      & 1
        \end{pmatrix}
\end{equation}
where $\omega_{i}^{2}$ is the eigenvalue of the matrix $A$.
\end{lem}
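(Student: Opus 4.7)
The plan is to exploit the fact that $A$ is symmetric positive semi-definite, so by the spectral theorem there exists an orthogonal $P \in \mathbb{R}^{n\times n}$ with $P^T A P = \Lambda = \mathrm{diag}(\omega_1^2,\ldots,\omega_n^2)$. Lift $P$ to the block-diagonal orthogonal matrix
\begin{equation*}
Q = \begin{pmatrix} P & 0 \\ 0 & P \end{pmatrix} \in \mathbb{R}^{2n\times 2n}.
\end{equation*}
Since every block of $M$ is either a scalar multiple of $I$ or of $A$, conjugation by $Q$ commutes with the block structure and yields
\begin{equation*}
Q^T M Q = \begin{pmatrix} I - h^2\Lambda & hI \\ -h\Lambda & I \end{pmatrix},
\end{equation*}
where now every $n\times n$ block is diagonal.

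Next I would apply the permutation that interleaves the coordinates, i.e.\ the map sending the ordering $(y_1,\ldots,y_n,v_1,\ldots,v_n)$ to $(y_1,v_1,y_2,v_2,\ldots,y_n,v_n)$. Let $\Pi$ denote the associated permutation matrix; $\Pi$ is orthogonal because permutation matrices always are. Under this reshuffling the $(i,i)$ entry of each of the four diagonal blocks collects into a single $2\times 2$ block acting on the pair $(y_i, v_i)$, and that block is exactly
\begin{equation*}
T_i = \begin{pmatrix} 1 - \omega_i^2 h^2 & h \\ -\omega_i^2 h & 1 \end{pmatrix}.
\end{equation*}
Setting $U = Q\Pi$ gives an orthogonal matrix (the product of two orthogonal matrices) with $U^T M U = \mathrm{diag}(T_1,\ldots,T_n)$, as claimed.

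The only real thing to check is the interleaving step: that the permutation actually produces the claimed $2\times 2$ blocks with no off-block coupling. This follows because after the $Q$-conjugation each block is diagonal, so the only nonzero entries of $Q^T M Q$ lie at positions $(i,i)$, $(i,n+i)$, $(n+i,i)$, $(n+i,n+i)$, and the permutation $\Pi$ groups precisely these four entries into the $i$-th $2\times 2$ diagonal block. I expect this bookkeeping to be the main (and essentially the only) obstacle; everything else is immediate from the spectral theorem and the algebraic form of $M$.
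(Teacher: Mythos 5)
Your proposal is correct and is essentially identical to the paper's own proof: the paper also diagonalizes $A$ by an orthogonal $U_{1}$, conjugates $M$ by $\mathbf{diag}(U_{1},U_{1})$ to get $\begin{pmatrix} I-h^{2}\Lambda & hI \\ -h\Lambda & I\end{pmatrix}$, and then applies exactly the interleaving permutation $\Pi$ you describe (given there by an explicit index formula) to collect the entries into the $2\times 2$ blocks $T_{i}$.
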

\begin{proof}
Let $\Lambda$ be the diagonal matrix with the eigenvalues of the matrix $A$ as below
$$
\Lambda =\begin{pmatrix} \omega^{2}_{1} & & & \\ &\omega^{2}_{2} & & \\ & &\ddots & \\ & & & \omega^{2}_{n} \end{pmatrix}.
$$
Since $A$ is positive definite and symmetric, there exists orthogonal matrix $U_{1}$ such that 
$$
U_{1}^{T}AU_{1} =  \Lambda.
$$
Let $\Pi$ be the permuation matrix satisfying 
$$
\Pi_{i,j} = \left\{
      \begin{aligned}
       &1, && j\;\text{odd},\;  i=\frac{j+1}{2} \\
       &1, && j\;\text{even},\; i=n+\frac{j}{2} \\
       &0, && \text{otherwise}
      \end{aligned}
      \right.   
$$
where $i$ is the row index and $j$ is the column index. Then, let $U = \mathbf{diag}(U_{1},U_{1})\Pi$, we have by conjugation
$$
\begin{aligned}
U^{T}MU & =\Pi^{T} \begin{pmatrix} U_{1}^{T} & \\ & U_{1}^{T}\end{pmatrix} \begin{pmatrix}I-h^{2}A & hI \\ -hA & I \end{pmatrix} \begin{pmatrix} U_{1} & \\ & U_{1}\end{pmatrix}\Pi \\
        & = \Pi^{T} \begin{pmatrix} I-h^2\Lambda & hI \\ -h\Lambda & I \end{pmatrix} \Pi\\
        & = \begin{pmatrix} T_{1} & & & \\ &T_{2} & & \\ & &\ddots & \\ & & & T_{n} \end{pmatrix}  
\end{aligned}
$$
\end{proof}
From Lemma~\ref{lem:symmetric}, we know that the equation~(\ref{eqn:equiv}) can be written as the equivalent form
\begin{equation}
\label{eqn:equivtran}
\begin{pmatrix}
(U_{1}^{T}y)_{k+1,i} \\
(U_{1}^{T}v)_{k+1,i}
\end{pmatrix} =T_{i} \begin{pmatrix}
(U_{1}^{T}y)_{k,i} \\
(U_{1}^{T}v)_{k,i}
\end{pmatrix}= \begin{pmatrix}
                        1-\omega_{i}^{2}h^{2} & h \\
                          -\omega_{i}^{2}h        &1
                         \end{pmatrix}\begin{pmatrix}
(U_{1}^{T}y)_{k,i} \\
(U_{1}^{T}v)_{k,i}
\end{pmatrix}
\end{equation}
where $i = 1, \ldots, n$.
\begin{lem}
For any step size $h$ satisfying $0<h\omega_{i}<2$, the eigenvalues of the matrix $T_{i}$ are complex with absolute value $1$.
\end{lem}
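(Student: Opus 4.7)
The plan is to reduce the question to a direct spectral analysis of the $2\times 2$ block $T_i$ in \eqref{eqn:2by2}. First I would compute the two elementary invariants: the determinant $\det(T_i) = (1-\omega_i^2 h^2)\cdot 1 - h\cdot(-\omega_i^2 h) = 1$ and the trace $\operatorname{tr}(T_i) = 2 - \omega_i^2 h^2$. This immediately gives the characteristic polynomial
\begin{equation*}
p(\lambda) = \lambda^2 - (2 - \omega_i^2 h^2)\,\lambda + 1.
\end{equation*}
The fact $\det(T_i)=1$ is the manifestation of symplecticity at the block-diagonal level, and it is precisely what will pin the modulus of the eigenvalues to $1$.

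Next I would examine the discriminant
\begin{equation*}
\Delta \;=\; (2 - \omega_i^2 h^2)^2 - 4 \;=\; \omega_i^2 h^2\,\bigl(\omega_i^2 h^2 - 4\bigr).
\end{equation*}
Under the hypothesis $0 < h\omega_i < 2$, the first factor is strictly positive while the second is strictly negative, so $\Delta < 0$. Hence the two roots $\lambda_{\pm}$ of $p$ are a pair of complex conjugates, which establishes the first half of the claim.

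Finally, since $\lambda_{+}$ and $\lambda_{-}$ are complex conjugates and their product equals the constant term of $p$, namely $\lambda_{+}\lambda_{-} = \det(T_i) = 1$, I conclude $|\lambda_{+}|^2 = \lambda_{+}\overline{\lambda_{+}} = \lambda_{+}\lambda_{-} = 1$, so $|\lambda_{\pm}| = 1$, completing the proof.

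I do not anticipate any real obstacle: the argument is a one-shot computation. The only subtle point is ensuring that the endpoint cases are properly excluded by the hypothesis: if $\omega_i h = 0$ then $T_i$ is a non-diagonalizable Jordan block with repeated real eigenvalue $1$, and if $\omega_i h = 2$ then $\Delta = 0$ and $T_i$ has a repeated real eigenvalue $-1$; the strict inequality $0 < h\omega_i < 2$ rules out both of these degenerate regimes and guarantees a genuinely complex pair on the unit circle, which is precisely the ``rotation'' behavior that underlies the oscillatory convergence observed in the figures.
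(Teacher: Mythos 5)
Your proof is correct and follows essentially the same route as the paper: a direct analysis of the characteristic polynomial of the $2\times 2$ block $T_i$. The paper simply writes out the explicit roots $\lambda_{1,2} = 1-\tfrac{h^{2}\omega_{i}^{2}}{2} \pm i\,h\omega_{i}\sqrt{1 - \tfrac{h^{2}\omega_{i}^{2}}{4}}$ and leaves the modulus computation implicit, whereas your use of $\det(T_i)=1$ together with the negative discriminant makes the unit-modulus conclusion explicit and is, if anything, the more complete write-up.
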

\begin{proof} For $i = 1, \ldots, n$, we have
$$
\left|\lambda I - T_{i}\right| = 0\Leftrightarrow \lambda_{1,2} = 1-\frac{h^{2}\omega_{i}^{2}}{2} \pm h\omega_{i}\sqrt{1 - \frac{h^{2}\omega_{i}^{2}}{4}}.
$$
\end{proof}
Let $\theta_{i}$ and $\phi_{i}$ for $i = 1,\ldots,n$ for the new coordinate variables as below
\begin{equation}
\label{eqn:coortran}
\left\{\begin{aligned}
       & \cos\theta_{i} = 1-\frac{h^{2}\omega_{i}^{2}}{2} \\
       & \sin\theta_{i} = h\omega_{i}\sqrt{1 - \frac{h^{2}\omega_{i}^{2}}{4}}
       \end{aligned}\right.,\qquad\quad
\left\{\begin{aligned}
       & \cos\phi_{i} = \frac{h\omega_{i}}{2} \\
       & \sin\phi_{i} = \sqrt{1 - \frac{h^{2}\omega_{i}^{2}}{4}}
       \end{aligned}\right.
\end{equation}
In order to make $\theta_{i}$ and $\phi_{i}$ located in $\left(0, \frac{\pi}{2}\right)$, we need to shrink to $0 < h\omega_{i} < \sqrt{2}$.

\begin{lem}
\label{lem:coorpro}
With the new coordinate in~(\ref{eqn:coortran}) for $0 < h\omega_{i} < \sqrt{2}$, we have 
\begin{equation}
\label{eqn:thetaphi}
2\phi_{i} + \theta_{i} = \pi
\end{equation}
and
\begin{equation}
\label{eqn:coorpro}
\left\{\begin{aligned}
       & \sin\theta_{i}  = \sin(2\phi_{i}) = h\omega_{i} \sin\phi_{i}\\
       & \sin(3\phi_{i}) = - \left(1 - h^{2}\omega_{i}^{2}\right)\sin\phi_{i}
       \end{aligned}\right. 
\end{equation} 
\end{lem}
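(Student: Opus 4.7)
The plan is to verify all three identities by direct computation from the definitions in~(\ref{eqn:coortran}), using nothing more than the standard double-angle and angle-addition formulas, and then confirm the range constraint so that the trigonometric equations lift to actual equalities of angles.

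First I will establish the relation $2\phi_i + \theta_i = \pi$. The cleanest route is to compute $\cos(2\phi_i)$ and $\sin(2\phi_i)$ via the double-angle formulas applied to the definitions of $\cos\phi_i$ and $\sin\phi_i$. Plugging in gives $\cos(2\phi_i) = 2\cos^2\phi_i - 1 = \tfrac{h^2\omega_i^2}{2} - 1 = -\cos\theta_i$, and $\sin(2\phi_i) = 2\sin\phi_i\cos\phi_i = h\omega_i\sqrt{1 - h^2\omega_i^2/4} = \sin\theta_i$. Since the restriction $0 < h\omega_i < \sqrt{2}$ was imposed precisely so that both $\theta_i$ and $\phi_i$ lie in $(0,\pi/2)$, the angles $2\phi_i$ and $\pi - \theta_i$ both lie in $(0,\pi)$, so the two trigonometric equalities force the identity $2\phi_i = \pi - \theta_i$.

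Second, the identity chain $\sin\theta_i = \sin(2\phi_i) = h\omega_i \sin\phi_i$ falls out as a byproduct: the first equality was already shown, and the second is just $2\sin\phi_i \cos\phi_i$ rewritten using $\cos\phi_i = h\omega_i/2$.

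Third, I would expand $\sin(3\phi_i)$ by the addition formula $\sin(3\phi_i) = \sin\phi_i \cos(2\phi_i) + \cos\phi_i \sin(2\phi_i)$, substitute $\cos(2\phi_i) = \tfrac{h^2\omega_i^2}{2} - 1$, $\sin(2\phi_i) = h\omega_i \sin\phi_i$, and $\cos\phi_i = h\omega_i/2$, and collect the $\sin\phi_i$ factor. The two contributions combine to $\sin\phi_i\bigl(\tfrac{h^2\omega_i^2}{2} - 1 + \tfrac{h^2\omega_i^2}{2}\bigr) = -(1 - h^2\omega_i^2)\sin\phi_i$, which is exactly the claimed formula. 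There is no serious obstacle here — the lemma is really bookkeeping — the only subtle point is making sure the angle-range argument for part one is spelled out, since without the restriction $0 < h\omega_i < \sqrt 2$ one would only get $2\phi_i \equiv \pi - \theta_i$ modulo $2\pi$.
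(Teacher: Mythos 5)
Your proof is correct, and it verifies the same identities by direct trigonometric computation, but the decomposition is the mirror image of the paper's. The paper starts from $\theta_i$ and expands $\sin(\theta_i+\phi_i)$ and $\sin(\theta_i-\phi_i)$ via the addition formulas, finds $\sin(\theta_i+\phi_i)=\sin\phi_i$, and then infers $\theta_i+\phi_i=\pi-\phi_i$ from the range restriction; the second display of~(\ref{eqn:coorpro}) then follows by rewriting $\sin(\theta_i-\phi_i)$ as $\sin(3\phi_i)$. You instead start from $\phi_i$ and use the double-angle formulas to show $\cos(2\phi_i)=-\cos\theta_i$ and $\sin(2\phi_i)=\sin\theta_i$, then expand $\sin(3\phi_i)=\sin(\phi_i+2\phi_i)$. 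The two routes are computationally equivalent, but yours has a small robustness advantage in establishing~(\ref{eqn:thetaphi}): by matching both the sine and the cosine you determine the angle uniquely on $(0,\pi)$, whereas the paper's argument from $\sin(\theta_i+\phi_i)=\sin\phi_i$ alone implicitly has to discard the branch $\theta_i+\phi_i=\phi_i$ (which is excluded only because $\theta_i>0$). Both proofs correctly rely on the restriction $0<h\omega_i<\sqrt{2}$ to place $\theta_i,\phi_i$ in $\left(0,\tfrac{\pi}{2}\right)$, and your final computation $\sin\phi_i\left(\tfrac{h^2\omega_i^2}{2}-1+\tfrac{h^2\omega_i^2}{2}\right)=-(1-h^2\omega_i^2)\sin\phi_i$ matches the paper's result exactly.
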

\begin{proof}
With Sum-Product identities of trigonometric function, we have 
$$
\begin{aligned}
\sin(\theta_{i} + \phi_{i}) & = \sin\theta_{i} \cos\phi _{i}+ \cos\theta_{i} \sin\phi_{i} \\
                            & = h\omega_{i}\sqrt{1 - \frac{h^{2}\omega_{i}^{2}}{4}} \cdot \frac{h\omega_{i}}{2} + \left(1-\frac{h^{2}\omega_{i}^{2}}{2}\right) \sqrt{1 - \frac{h^{2}\omega_{i}^{2}}{4}}\\
                            & = \sqrt{1 - \frac{h^{2}\omega_{i}^{2}}{4}}\\
                            & = \sin\phi_{i}.
\end{aligned}
$$
Since $0< h\omega_{i} < 2$, we have $\theta_{i}, \phi_{i} \in \left(0, \frac{\pi}{2}\right)$, we can obtain that 
$$
\theta_{i} + \phi_{i} = \pi - \phi_{i} \Leftrightarrow \theta_{i} = \pi - 2\phi_{i}
$$
and with the coordinate transfornation in~(\ref{eqn:coortran}), we have
$$
\sin\theta_{i} = h\omega_{i} \sin\phi_{i} \Leftrightarrow \sin(2\phi_{i}) = h\omega_{i} \sin\phi_{i}.
$$
Next, we use Sum-Product identities of trigonometric function furthermore, 
$$
\begin{aligned}
\sin(\theta_{i} - \phi_{i}) & = \sin\theta_{i} \cos\phi _{i}- \cos\theta_{i} \sin\phi_{i} \\
                            & = h\omega_{i}\sqrt{1 - \frac{h^{2}\omega_{i}^{2}}{4}} \cdot \frac{h\omega_{i}}{2} - \left(1-\frac{h^{2}\omega_{i}^{2}}{2}\right) \sqrt{1 - \frac{h^{2}\omega_{i}^{2}}{4}}\\
                            & = \left(h^{2}\omega_{i}^{2}-1\right)\sqrt{1 - \frac{h^{2}\omega_{i}^{2}}{4}}\\
                            & = -\left(1-h^{2}\omega_{i}^{2}\right) \sin\phi_{i}
\end{aligned}
$$
and with $\theta_{i} = \pi - 2\phi_{i}$, we have
$$
\sin(3\phi_{i}) = -\left(1-h^{2}\omega_{i}^{2}\right) \sin\phi_{i}
$$
\end{proof}
\begin{lem}
\label{lem:equiv_express}
With the new coordinate in~(\ref{eqn:coortran}), the matrix $T_{i}$ ($i = 1, \ldots, n$) in~(\ref{eqn:2by2}) can expressed as below, 
\begin{equation}
\label{eqn:equiv_T}
T_{i} = \frac{1}{\omega_{i} \left(e^{-i\phi_{i}} - e^{i\phi_{i}}\right)}
           \begin{pmatrix}
           1 & 1 \\
           \omega_{i} e^{i\phi_{i}} & \omega_{i} e^{-i\phi_{i}}
           \end{pmatrix}\begin{pmatrix}
           e^{i\theta_{i}} & 0 \\
           0 & e^{-i\theta_{i}}
           \end{pmatrix}\begin{pmatrix}
           \omega_{i} e^{-i\phi_{i}} & -1 \\
           -\omega_{i} e^{i\phi_{i}} & 1
           \end{pmatrix}
\end{equation}
\end{lem}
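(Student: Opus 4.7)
The plan is to realize the claimed identity as a plain eigendecomposition $T_i = P D P^{-1}$, where $D = \mathrm{diag}(e^{i\theta_i}, e^{-i\theta_i})$ and $P = \begin{pmatrix} 1 & 1 \\ \omega_i e^{i\phi_i} & \omega_i e^{-i\phi_i}\end{pmatrix}$. The previous lemma has already identified the eigenvalues of $T_i$ as the pair $e^{\pm i\theta_i}$, so I only need the eigenvectors and the inverse of the eigenvector matrix.

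First I would compute the eigenvectors. Fixing the first coordinate of the eigenvector equal to $1$, the row $(1-\omega_i^2 h^2 - \lambda, h)$ or the row $(-\omega_i^2 h, 1-\lambda)$ of $T_i - \lambda I$ gives the second coordinate as $v_2 = \omega_i^2 h/(1-\lambda)$. The key simplification is the identity
\begin{equation*}
1 - e^{\pm i\theta_i} = (1-\cos\theta_i) \mp i\sin\theta_i = \tfrac{h^2\omega_i^2}{2} \mp i\, h\omega_i\sqrt{1-\tfrac{h^2\omega_i^2}{4}} = h\omega_i\bigl(\cos\phi_i \mp i\sin\phi_i\bigr) = h\omega_i e^{\mp i\phi_i},
\end{equation*}
which uses precisely the definitions of $\theta_i$ and $\phi_i$ in (\ref{eqn:coortran}). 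Substituting back yields $v_2 = \omega_i e^{\pm i\phi_i}$ for the eigenvalue $e^{\pm i\theta_i}$, so the eigenvector matrix is the $P$ above.

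Next I would invert $P$ directly using the $2\times 2$ cofactor formula. Its determinant is $\det P = \omega_i(e^{-i\phi_i} - e^{i\phi_i})$, so
\begin{equation*}
P^{-1} = \frac{1}{\omega_i(e^{-i\phi_i} - e^{i\phi_i})}\begin{pmatrix} \omega_i e^{-i\phi_i} & -1 \\ -\omega_i e^{i\phi_i} & 1 \end{pmatrix},
\end{equation*}
which matches the last factor in the stated formula. Assembling $T_i = P D P^{-1}$ gives exactly (\ref{eqn:equiv_T}).

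The only delicate point is that $P$ must actually be invertible, i.e.\ $e^{-i\phi_i} \ne e^{i\phi_i}$. Under the hypothesis $0 < h\omega_i < \sqrt{2}$ carried over from the preceding lemma, we have $\phi_i \in (0,\pi/2)$, so $\sin\phi_i > 0$ and $\det P = -2i\omega_i\sin\phi_i \ne 0$ (assuming $\omega_i > 0$; the degenerate case $\omega_i = 0$ makes $T_i$ upper-triangular with a Jordan block and must be flagged separately, but it is excluded once $A$ is taken positive definite along its nonzero spectral directions). So the main ``obstacle'' is really just bookkeeping: verifying the identity $1 - e^{\pm i\theta_i} = h\omega_i e^{\mp i\phi_i}$ cleanly and noting the non-vanishing of $\det P$. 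Once these are in hand, the lemma is immediate.
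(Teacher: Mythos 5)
Your proposal is correct and follows essentially the same route as the paper: the paper's own proof simply asserts the eigenvector relations $T_i(1,\omega_i e^{\pm i\phi_i})^{T} = e^{\pm i\theta_i}(1,\omega_i e^{\pm i\phi_i})^{T}$ and reads off the eigendecomposition, which is exactly your $T_i = PDP^{-1}$. You additionally supply the verification the paper omits (the identity $1-e^{\pm i\theta_i}=h\omega_i e^{\mp i\phi_i}$ and the non-vanishing of $\det P$), which is a welcome but not structurally different addition.
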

\begin{proof}
For the coordinate transformation in~(\ref{eqn:coortran}), we have
$$
T_{i} \begin{pmatrix}
       1 \\
       \omega_{i}e^{i\phi_{i}}
      \end{pmatrix} = \begin{pmatrix}
       1 \\
       \omega_{i}e^{i\phi_{i}}
      \end{pmatrix}  e^{i\theta_{i}}     \qquad \text{and}\qquad T_{i} \begin{pmatrix}
       1 \\
       \omega_{i}e^{-i\phi_{i}}
      \end{pmatrix} = \begin{pmatrix}
       1 \\
       \omega_{i}e^{-i\phi_{i}} 
      \end{pmatrix} e^{-i\theta_{i}}
$$
Hence,~(\ref{eqn:equiv_T}) is proved.
\end{proof}

\subsection{The Asymptotic Analysis}
\begin{thm}
\label{thm:recur}
Let the initial value $y_{0}$ and $v_{0}$, after the first $k$ steps without reseting the velocity, the iterative solution~(\ref{eqn:equiv_linear}) with the equivalent form~(\ref{eqn:equivtran}) has the form as below
\begin{equation}
\label{eqn:sol}
 \begin{pmatrix} (U_{1}^{T}y)_{k,i} \\ (U_{1}^{T}v)_{k,i} \end{pmatrix} =T_{i}^{k}  \begin{pmatrix} (U_{1}^{T}y)_{0,i} \\ (U_{1}^{T}v)_{0,i} \end{pmatrix} = \begin{pmatrix}
                    -\frac{\sin(k\theta_{i} - \phi_{i})}{\sin\phi_{i}} & \frac{\sin (k\theta_{i})}{\omega_{i}\sin\phi_{i}} \\
                    -\frac{\omega_{i} \sin (k\theta_{i})}{\sin\phi_{i}} & \frac{\sin(k\theta_{i} + \phi_{i})}{\sin\phi_{i}}
                    \end{pmatrix} \begin{pmatrix} (U_{1}^{T}y)_{0,i} \\ (U_{1}^{T}v)_{0,i} \end{pmatrix}
\end{equation}
\end{thm}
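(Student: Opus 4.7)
The plan is to read off $T_i^k$ from the spectral decomposition already produced in Lemma~\ref{lem:equiv_express}. Write
\[
P_i = \begin{pmatrix} 1 & 1 \\ \omega_i e^{i\phi_i} & \omega_i e^{-i\phi_i} \end{pmatrix}, \qquad D_i = \begin{pmatrix} e^{i\theta_i} & 0 \\ 0 & e^{-i\theta_i} \end{pmatrix}, \qquad Q_i = \begin{pmatrix} \omega_i e^{-i\phi_i} & -1 \\ -\omega_i e^{i\phi_i} & 1 \end{pmatrix},
\]
so Lemma~\ref{lem:equiv_express} reads $T_i = \frac{1}{\omega_i(e^{-i\phi_i}-e^{i\phi_i})} P_i D_i Q_i$. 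A first step I would record explicitly is that $Q_i P_i = \omega_i(e^{-i\phi_i} - e^{i\phi_i}) I$, so that $P_i^{-1}$ is just the scalar-normalized copy of $Q_i$ and the displayed factorization is genuinely an eigendecomposition. From this the $k$-th power telescopes into $T_i^k = \frac{1}{\omega_i(e^{-i\phi_i}-e^{i\phi_i})} P_i D_i^k Q_i$ with $D_i^k = \mathrm{diag}(e^{ik\theta_i}, e^{-ik\theta_i})$; in other words, raising $T_i$ to the $k$-th power amounts to the substitution $\theta_i \mapsto k\theta_i$ inside the same sandwich.

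The remainder is a controlled $2\times 2$ expansion. After forming $P_i D_i^k Q_i$, every entry is a combination of $e^{\pm ik\theta_i}$ and $e^{\pm i(k\theta_i \pm \phi_i)}$ where the two exponents differ only by a sign, so Euler's identity $2i\sin\alpha = e^{i\alpha} - e^{-i\alpha}$ collapses each one into a single sine. The off-diagonal entries contribute $2i\sin(k\theta_i)$ and $2i\omega_i^2\sin(k\theta_i)$, while the two diagonal entries carry the $\pm\phi_i$ shift coming from the columns of $P_i$ and rows of $Q_i$, producing $\pm 2i\omega_i\sin(k\theta_i \mp \phi_i)$. Multiplying by the scalar prefactor, which I would rewrite as $\frac{i}{2\omega_i\sin\phi_i}$ using $e^{-i\phi_i}-e^{i\phi_i} = -2i\sin\phi_i$, cancels every imaginary unit against the $2i$'s already present, and the four surviving entries are precisely those displayed in~(\ref{eqn:sol}).

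The only real obstacle is sign-bookkeeping: tracking the $i^2 = -1$ that appears when the prefactor meets the $2i$ from Euler's identity, and making sure the $+\phi_i$ versus $-\phi_i$ shift ends up on the correct diagonal entry. I would cross-check the final expression by two small sanity tests. At $k=0$ the four entries should give the identity, which holds since $\sin(-\phi_i)/\sin\phi_i = -1$ and $\sin(\phi_i)/\sin\phi_i = 1$ while $\sin 0 = 0$. At $k=1$ they should recover the entries of $T_i$ itself; verifying this uses exactly the identities $\sin(2\phi_i) = h\omega_i\sin\phi_i$ and (after one more step) $\sin(3\phi_i) = -(1-h^2\omega_i^2)\sin\phi_i$ supplied by Lemma~\ref{lem:coorpro}, which confirms that $2\phi_i + \theta_i = \pi$ is consistent with the closed form. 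An induction proof based directly on these same identities is possible and would avoid complex arithmetic, but the spectral route above is markedly shorter and keeps the role of the eigenangle $\theta_i$ transparent.
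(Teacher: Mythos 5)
Your proposal is correct and follows essentially the same route as the paper: apply the eigendecomposition of $T_i$ from Lemma~\ref{lem:equiv_express}, raise the diagonal factor to the $k$-th power, and collapse the resulting exponentials into sines via Euler's identity, with the prefactor $\frac{1}{\omega_i(e^{-i\phi_i}-e^{i\phi_i})} = \frac{i}{2\omega_i\sin\phi_i}$ cancelling the factors of $2i$. Your explicit check that $Q_iP_i = \omega_i(e^{-i\phi_i}-e^{i\phi_i})I$ (justifying the telescoping of $T_i^k$) and the sanity tests at $k=0,1$ are small additions the paper leaves implicit, but the argument is the same.
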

\begin{proof}
With Lemma~\ref{lem:equiv_express} and the coordinate transformation~(\ref{eqn:coortran}), we have
$$
\begin{aligned}
T_{i}^{k} & =\frac{1}{\omega_{i} \left(e^{-i\phi_{i}} - e^{i\phi_{i}}\right)}
           \begin{pmatrix}
           1 & 1 \\
           \omega_{i} e^{i\phi_{i}} & \omega_{i} e^{-i\phi_{i}}
           \end{pmatrix}
           \begin{pmatrix}
           e^{i\theta_{i}} & 0 \\
           0 & e^{-i\theta_{i}}
           \end{pmatrix}^{k} \begin{pmatrix}
           \omega_{i} e^{-i\phi_{i}} & -1 \\
           -\omega_{i} e^{i\phi_{i}} & 1
           \end{pmatrix}\\
                & =\frac{1}{\omega_{i} \left(e^{-i\phi_{i}} - e^{i\phi_{i}}\right)}
           \begin{pmatrix}
           1 & 1 \\
           \omega_{i} e^{i\phi_{i}} & \omega_{i} e^{-i\phi_{i}}
           \end{pmatrix} \begin{pmatrix}
           \omega e^{i(k\theta_{i} -\phi_{i})} & -e^{ik\theta_{i}} \\
           -\omega e^{-i(k\theta_{i} -\phi_{i})} & e^{-ik\theta_{i}}
           \end{pmatrix} \\
             & = \begin{pmatrix}
                    -\frac{\sin(k\theta_{i} - \phi_{i})}{\sin\phi_{i}} & \frac{\sin (k\theta_{i})}{\omega_{i}\sin\phi_{i}} \\
                    -\frac{\omega_{i} \sin (k\theta_{i})}{\sin\phi_{i}} & \frac{\sin(k\theta_{i} + \phi_{i})}{\sin\phi_{i}}
                    \end{pmatrix}  
\end{aligned}
$$
The proof is complete.
\end{proof}
Comparing~(\ref{eqn:sol}) and~(\ref{eqn:equivtran}), we can obtain that 
$$
\frac{\sin(k\theta_{i} - \phi_{i})}{\sin\phi_{i}} = 1 - h^{2}\omega_{i}^{2}.
$$
With the intial value $(y_{0},0)^{T}$, then the initial value for~(\ref{eqn:equivtran}) is $(U_{1}^{T}y_{0}, 0)$. In order to make sure the numerical solution, or the iterative solution owns the same behavior as the analytical solution, we need to set $0< h\omega_{i} < 1$. 
\begin{rem}
Here, the behavior is similar as the thought in~\citep{lee2016gradient}. The step size $0<hL<2$ make sure the global convergence of gradient method. And the step size $0<hL<1$ make the uniqueness of the trajectory along the gradient method, the thought of which is equivalent of the existencen and uniqueness of the solution for ODE. Actually, the step size $0<hL<1$ owns the property with the solution of ODE, the continous-limit version. A global existence of the solution for gradient system is proved in~\citep{perko2013differential}.
\end{rem}
For the good-conditioned eigenvalue of the Hessian $\nabla^{2} f(x^{\star})$, every method such as gradient method, momentum method, Nesterov accelerated gradient method and artificially dissipating energy method has the good convergence rate shown by the experiment. However, for our artificially dissipating energy method, since there are trigonometric functions from~(\ref{eqn:sol}), we cannot propose the rigorous mathematic proof for the convergence rate. If everybody can propose a theoretical proof, it is very beautiful. Here, we propose a theoretical approximation for ill-conditioned case, that is, the direction with small eigenvalue $\lambda (\nabla^{2}f(x^{\star})) \ll L$. 
\begin{assumption}
\label{ass:ill}
If the step size $h = \frac{1}{\sqrt{L}}$ for~(\ref{eqn:equiv_linear}), for the ill-conditioned eigenvalue $\omega_{i} \ll \sqrt{L}$, the coordinate variable can be approximated by the analytical solution as 
\begin{equation}
\label{eqn:approx}
\theta_{i} = h\omega_{i}, \qquad \text{and}\qquad\phi_{i} = \frac{\pi}{2}.
\end{equation}
\end{assumption}
With Assumption~\ref{ass:ill}, the iterative solution~(\ref{eqn:sol}) can be rewritten as 
\begin{equation}
\label{eqn:ass_vartran}
 \begin{pmatrix} (U_{1}^{T}y)_{k,i} \\ (U_{1}^{T}v)_{k,i} \end{pmatrix} = \begin{pmatrix} \cos(kh\omega_{i}) & \frac{\sin(kh\omega_{i})}{\omega_{i}}\\ -\omega_{i}\sin(kh\omega_{i}) & -\cos(kh\omega_{i}) \end{pmatrix}\begin{pmatrix} (U_{1}^{T}y)_{0,i} \\ (U_{1}^{T}v)_{0,i} \end{pmatrix}
\end{equation}

\begin{thm}
For every ill-conditioned eigen-direction, with every initial condition $(y_{0},0)^{T}$, if the algorithm~\ref{alg:local} is implemented at $\left\|v_{iter}\right\| \leq \left\|v\right\|$, then there exist an eigenvalue $\omega_{i}^{2}$ such that 
$$k\omega_{i} h \geq \frac{\pi}{2}.$$
\end{thm}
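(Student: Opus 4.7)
The plan is to work directly with the closed-form expression for the iterates that Theorem~\ref{thm:recur} (together with Assumption~\ref{ass:ill}) provides. Starting from $v_0 = 0$, equation~(\ref{eqn:ass_vartran}) specializes to
\[
(U_{1}^{T}v)_{k,i} \;=\; -\,\omega_{i}\sin(kh\omega_{i})\,(U_{1}^{T}y)_{0,i}.
\]
Since $U_{1}$ is orthogonal, it preserves norms, so if I write $a_{i} := (U_{1}^{T}y_{0})_{i}$ I get the clean identity
\[
\|v_{k}\|^{2} \;=\; \sum_{i=1}^{n}\omega_{i}^{2}\sin^{2}(kh\omega_{i})\,a_{i}^{2}.
\]
This reduces the theorem to a statement about when the partial sum on the right can fail to be strictly increasing in $k$.

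Next, I would argue by contradiction. Let $k$ be the step at which the condition $\|v_{\text{iter}}\|\le\|v\|$ first triggers inside Algorithm~\ref{alg:local}; unravelling the indexing, this says $\|v_{k}\|\le\|v_{k-1}\|$. Suppose for contradiction that $kh\omega_{i}<\pi/2$ for every $i$. Then a fortiori $(k-1)h\omega_{i}<\pi/2$ for every $i$, so both arguments $(k-1)h\omega_{i}$ and $kh\omega_{i}$ lie in the interval $[0,\pi/2)$ on which $\sin$ is strictly increasing and nonnegative. Consequently
\[
\sin^{2}((k-1)h\omega_{i}) \;<\; \sin^{2}(kh\omega_{i})
\]
whenever $\omega_{i}>0$. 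Multiplying by the nonnegative weights $\omega_{i}^{2}a_{i}^{2}$ and summing — using that $y_{0}\neq 0$ implies at least one $a_{i}\neq 0$ — yields $\|v_{k-1}\|^{2}<\|v_{k}\|^{2}$, contradicting the triggering condition. Hence some eigenvalue must satisfy $kh\omega_{i}\ge \pi/2$.

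The routine parts are the trigonometric monotonicity and the orthogonal change of basis; both are immediate from the lemmas already established. The main subtlety I anticipate is the boundary bookkeeping: I must check that the trigger cannot fire at $k=1$ (where $v_{0}=0$ would make the comparison degenerate), which follows because the symplectic Euler update gives $v_{1}=-h\nabla f(x_{0})\neq 0$ whenever $y_{0}\neq 0$ under the positive-definite linearization, so $k\geq 2$ and $(k-1)h\omega_{i}>0$ as required. A second subtlety is that the statement lives only at the level of the approximation~(\ref{eqn:approx}); I would note explicitly that the conclusion is an asymptotic one, inheriting whatever error is introduced by Assumption~\ref{ass:ill}, and that the argument uses this assumption only to justify replacing $\theta_{i}$ by $h\omega_{i}$ inside the $\sin$ arguments. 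With these points addressed, the contradiction closes cleanly and the inequality $kh\omega_{i}\geq\pi/2$ is exactly the $\mathcal{O}(1/\sqrt{\mu})$ iteration count that matches the intuition developed in Section~\ref{sec:introduction}.
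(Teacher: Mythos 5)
Your proposal is correct and follows essentially the same route as the paper: both express $\|U_{1}^{T}v\|$ in the closed analytical form $\bigl(\sum_{i}\omega_{i}^{2}(U_{1}^{T}y_{0})_{i}^{2}\sin^{2}(kh\omega_{i})\bigr)^{1/2}$ and observe that if $kh\omega_{i}<\pi/2$ for every $i$ the norm is strictly increasing in $k$, so the trigger cannot fire. Your write-up merely makes explicit the details the paper leaves implicit (orthogonal invariance of the norm, monotonicity of $\sin^{2}$ on $[0,\pi/2)$, and the $k=1$ boundary case).
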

\begin{proof}
When $\left\|v_{iter}\right\| \leq \left\|v\right\|$, then $\left\|U_{1}^{T}v_{iter}\right\| \leq \left\|U_{1}^{T}v\right\|$. While for the $\left\|U_{1}^{T}v\right\|$, we can write in the analytical form, 
$$
\left\|U_{1}^{T}v\right\| = \sqrt{\sum_{i=1}^{n} \omega_{i}^{2}(U_{1}y_{0})_{i}^{2}\sin^{2}(kh\omega_{i})}
$$
if there is no $k\omega_{i} h < \frac{\pi}{2}$, $\left\|U_{1}^{T}v\right\|$ increase with $k$ increasing.
\end{proof}

For some $i$ such that $k\omega_{i}h$ approximating $\frac{\pi}{2}$, we have
\begin{equation}
\label{eqn:decayrapid}
\begin{aligned}
\frac{\left|(U_{1}^{T}y)_{k+1,i}\right|}{\left|(U_{1}^{T}y)_{k,i}\right|} & = \frac{\cos\left((k+1)h\omega_{i}\right)}{\cos\left(kh\omega_{i}\right)} \\
                                                                                                        & = e^{\ln\cos\left((k+1)h\omega_{i}\right) - \ln \cos\left(kh\omega_{i}\right) } \\
                                                                                                        & = e^{-\tan(\xi) h\omega_{i}}
\end{aligned}
\end{equation}
where $\xi \in \left(kh\omega_{i}, (k+1)h\omega_{i}\right)$. Hence, with $\xi$ approximating $\frac{\pi}{2}$, $\left|(U_{1}^{T}y)_{k,i}\right|$ approximatie $0$ with the linear convergence, but the coefficient will also decay with the rate $e^{-\tan(\xi) h\omega_{i}}$ with $\xi \rightarrow \frac{\pi}{2}$. With the Laurent expansion for $\tan \xi$ at $\frac{\pi}{2}$, i.e.,
$$
\tan \xi = -\frac{1}{\xi - \frac{\pi}{2}} +\frac{1}{3} \left(\xi - \frac{\pi}{2}\right) + \frac{1}{45} \left(\xi - \frac{\pi}{2}\right)^{3} + \mathcal{O}\left(\left(\xi - \frac{\pi}{2}\right)^{5}\right)
$$  
the coefficient has the approximating formula
$$ 
e^{-\tan(\xi) h\omega_{i}} \approx e^{ \frac{h\omega_{i}}{\xi - \frac{\pi}{2}}} \leq \left(\frac{\pi}{2} -\xi \right)^{n}.
$$
where $n$ is an arbitrary large real number in $\mathbb{R}^{+}$ for $\xi \rightarrow \frac{\pi}{2}$.

\section{Experimental Demonstration}
\label{sec:experimental}
In this section, we implement the artificially dissipating energy algorithm (algorithm~\ref{alg:local}), energy conservation algorithm (algorithm~\ref{alg:global}) and the combined algorithm (algorithm~\ref{alg:combined}) into  high-dimension data for comparison with gradient method, momentum method and Nesterov accelerated gradient method.
\subsection{Strongly Convex Function}
Here, we investigate the artificially dissipating energy algorithm (algorithm~\ref{alg:local}) for the strongly convex function for comparison with gradient method, momentum method and Nesterov accelerated gradient method (strongly convex case) by the quadratic function as below. 
\begin{equation}
\label{eqn:strong}
f(x) = \frac{1}{2}x^{T} Ax + b^{T}x
\end{equation}
where $A$ is symmetric and positive-definite matrix.  The two cases are shown as below:
\begin{enumerate}[label=\textbf{(\alph*)}]
\item The generative matrix $A$ is $500\times 500$ random positive definite matrix with eigenvalue from $1e-6$ to $1$ with one defined eigenvalue $1e-6$. The generative vector $b$ follows  i.i.d. Gaussian distribution with mean $0$ and variance $1$.  
\item The generative matrix $A$ is the notorious example in Nesterov's book~\citep{nesterov2013introductory}, i.e.,
         $$
         A = \begin{pmatrix}
                2 & -1 &   &  &&\\
                -1 &2  &-1&  &&\\
                    &-1 & 2 &\ddots  &&\\
                    &     &\ddots  &\ddots &\ddots &            \\
                    &     &  &\ddots  &\ddots       &-1   \\
                    &      &          &   & -1   &2  
               \end{pmatrix}
         $$
         the eigenvalues of the matrix are
         $$
         \lambda_{k} = 2 - 2\cos\left(\frac{k\pi}{n+1}\right) = 4\sin^{2}\left(\frac{k\pi}{2(n+1)}\right)
         $$ 
         and $n$ is the dimension of the matrix $A$. The eigenvector can be solved by the second Chebyshev's polynomial. We implement $\dim(A) = 1000$   
         and $b$ is zero vector.  Hence, the smallest eigenvalue is approximating
         $$
         \lambda_{1} = 4\sin^{2}\left(\frac{\pi}{2(n+1)}\right) \approx \frac{\pi^{2}}{1001^{2}} \approx 10^{-5}
         $$
\end{enumerate}
\begin{figure}[H]
\begin{minipage}[t]{0.5\linewidth}
\centering
\includegraphics[width=3.2in]{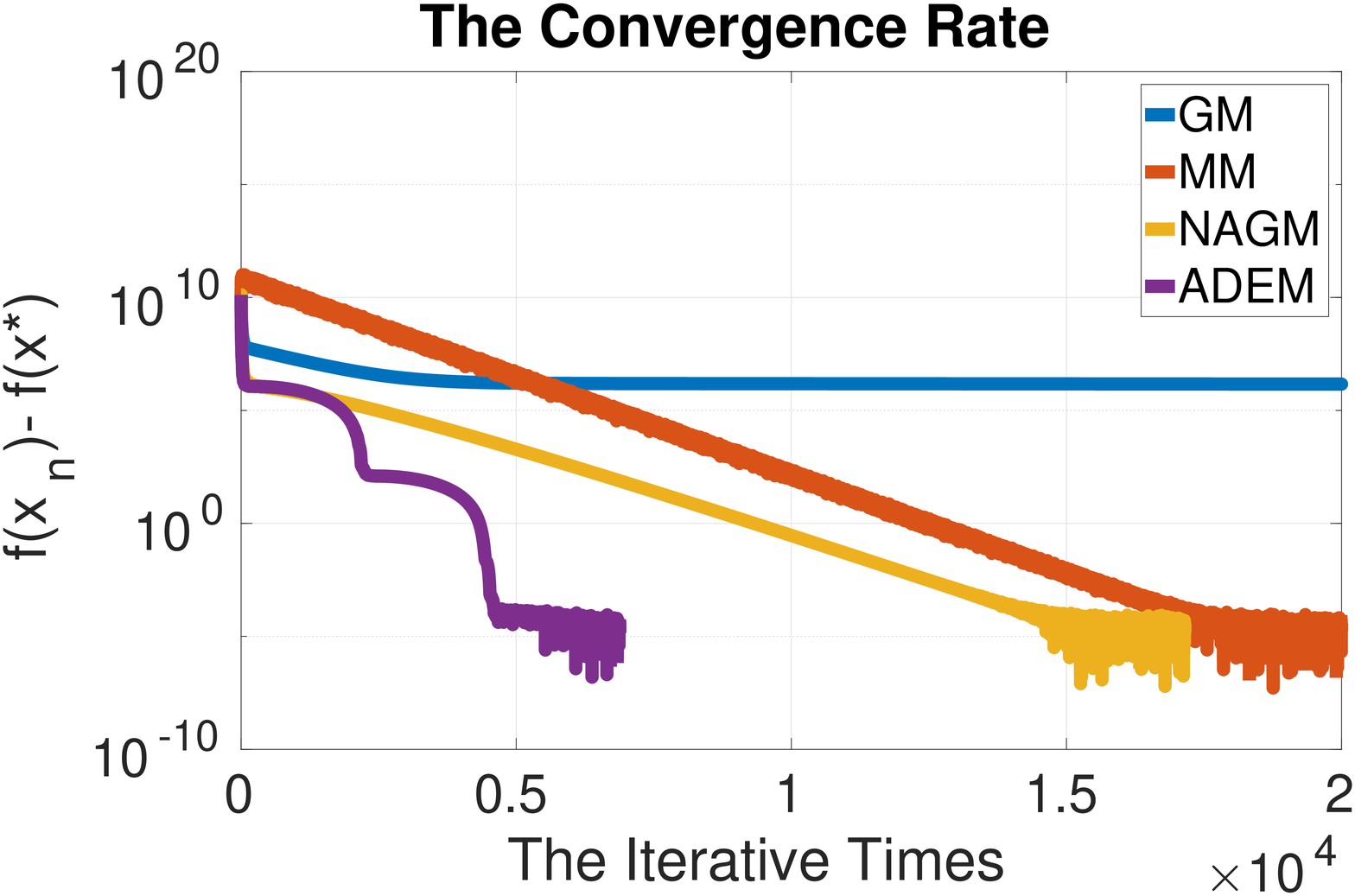}
\end{minipage}%
\begin{minipage}[t]{0.5\linewidth}
\centering
\includegraphics[width=3.2in]{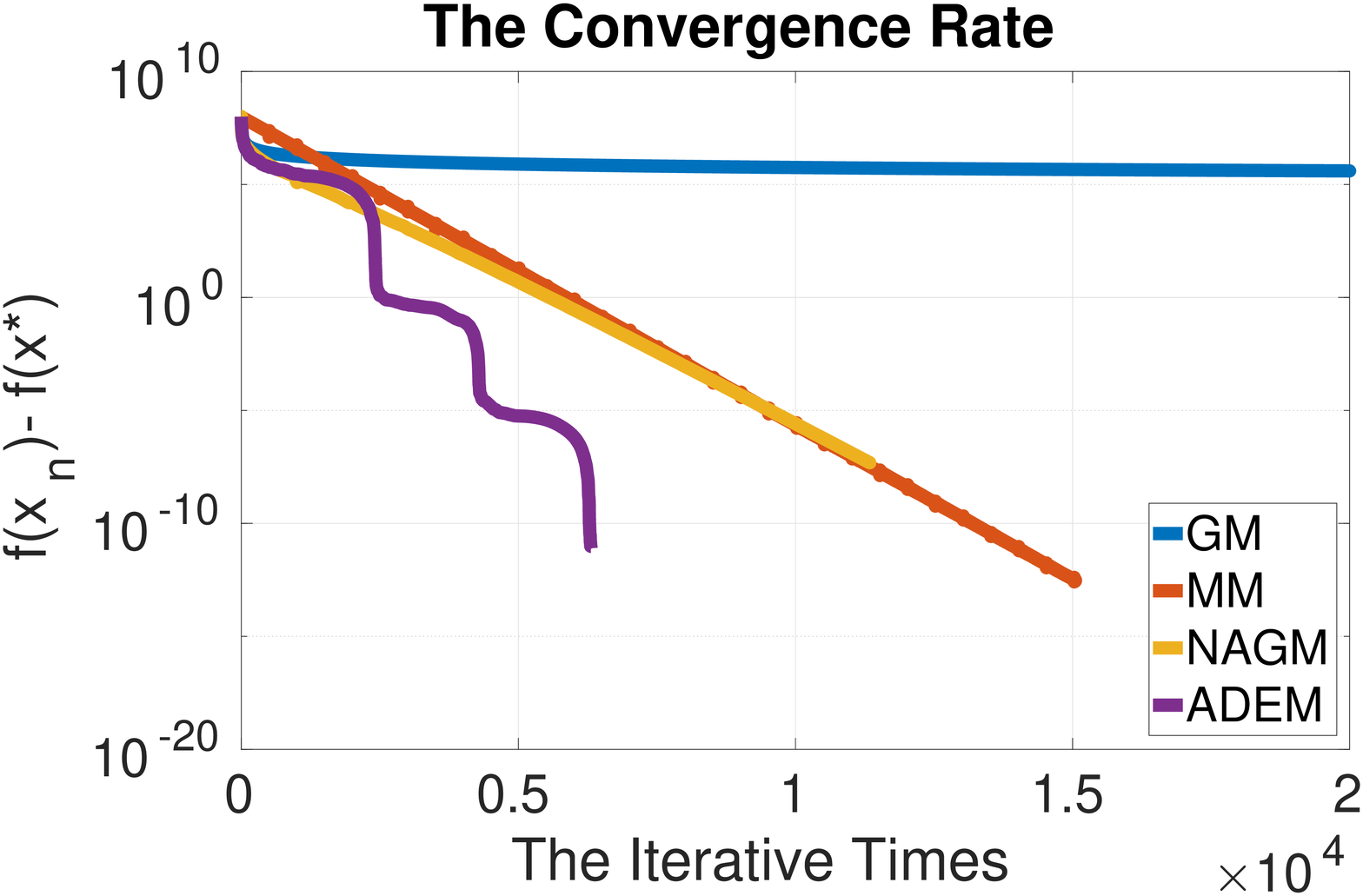}
\end{minipage}
\caption{The Left: the case ($\mathbf{a}$) with the initial point $x_{0} = 0$. The Right: the case ($\mathbf{b}$) with the initial point $x_{0} = 1000$} 
\label{fig:strongly}
\end{figure}

\subsection{Non-Strongly Convex Function}
Here, we investigate the artificially dissipating energy algorithm (algorithm~\ref{alg:local}) for the non-strongly convex function for comparison with gradient method, Nesterov accelerated gradient method (non-strongly convex case) by the log-sum-exp function as below. 
\begin{equation}
\label{eqn:nonstrong}
f(x) = \rho \log \left[\sum_{i=1}^{n}\exp\left( \frac{\langle a_{i}, x\rangle - b_{i}}{\rho}\right)\right]
\end{equation}
where $A$ is the $m\times n$ matrix with $a_{i}$, $(i=1,\ldots,m)$ the column vector of $A$ and $b$ is the $n\times 1$ vector with component $b_{i}$. $\rho$ is the parameter. We show the experiment in~(\ref{eqn:nonstrong}): the matrix $A = \left(a_{ij}\right)_{50\times 200}$ and the vector $b = (b_{i})_{200\times 1}$  are set by the entry following i.i.d Gaussian distribution for the paramter $\rho=5$ and $\rho=10$. 
\begin{figure}[H]
\begin{minipage}[t]{0.5\linewidth}
\centering
\includegraphics[width=3.2in]{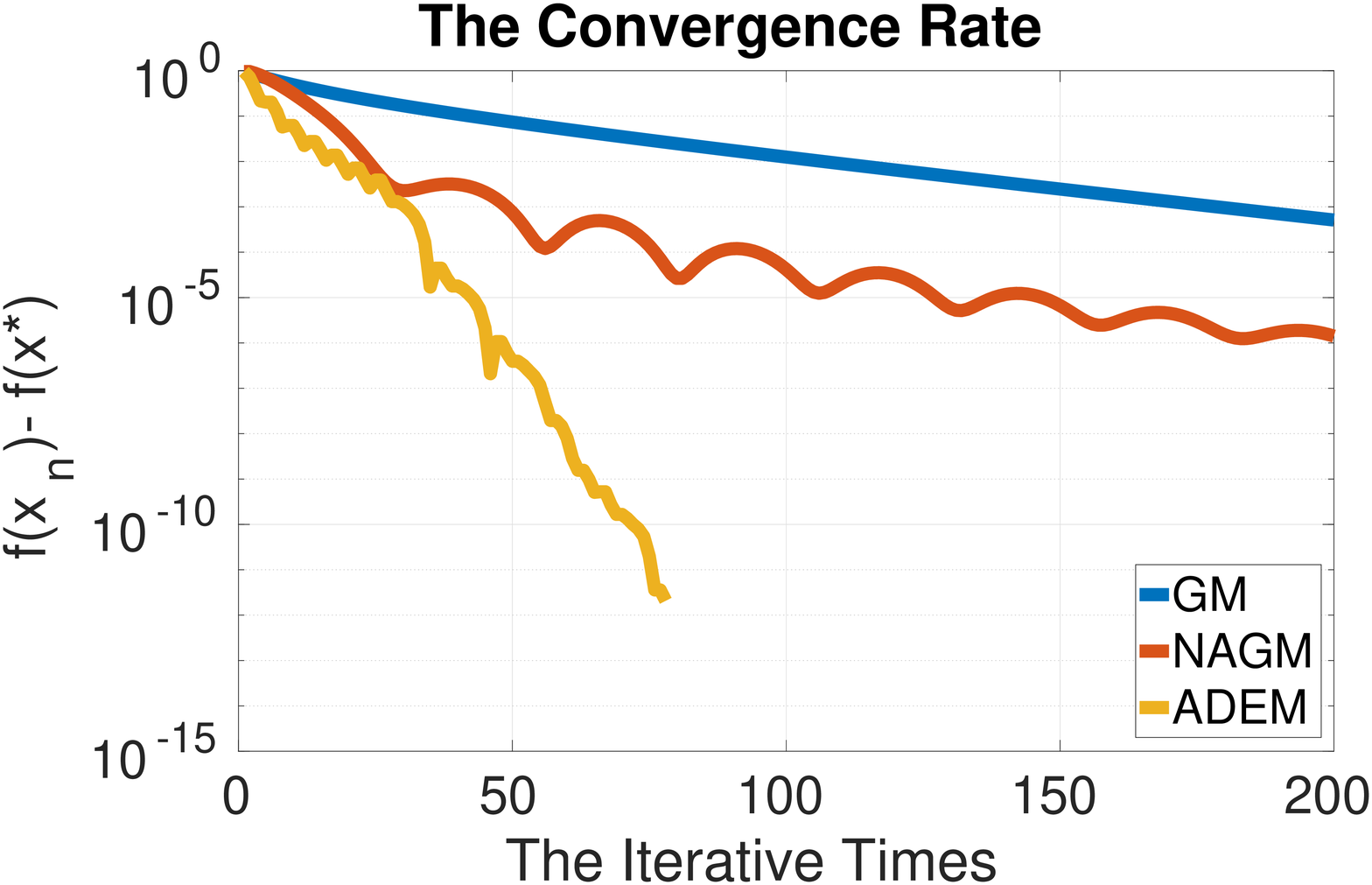}
\end{minipage}%
\begin{minipage}[t]{0.5\linewidth}
\centering
\includegraphics[width=3.2in]{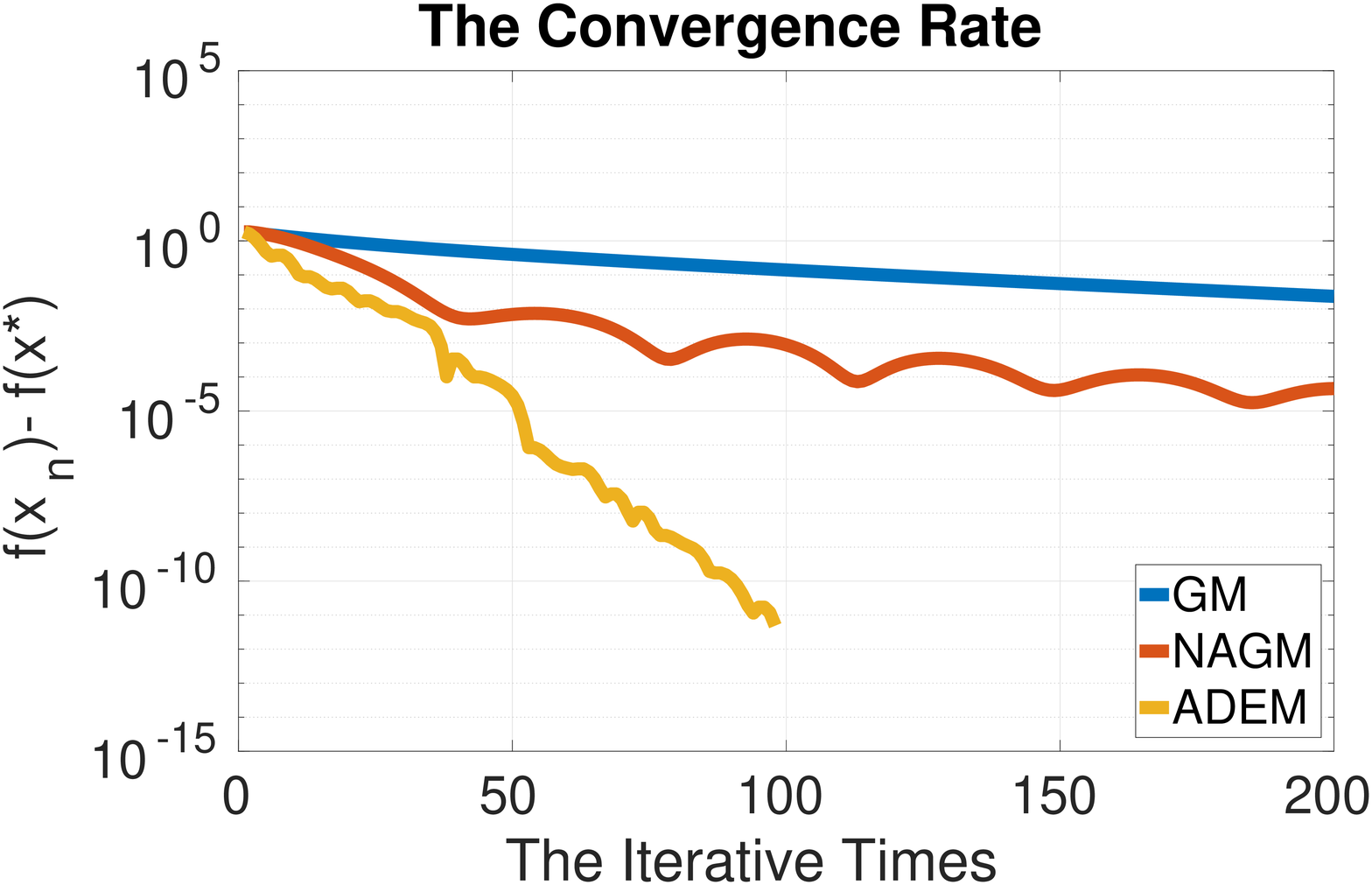}
\end{minipage}
\caption{The convergence rate is shown from the initial point $x_{0}=0$. The Left: $\rho =5$; The Right: $\rho = 10$.} 
\label{fig:nonconvex}
\end{figure}

\subsection{Non-convex Function}
For the nonconvex function, we exploit classical test function, known as artificial landscape, to evaluate characteristics of optimization algorithms from general performance and precision. In this paper, we show our algorithms implementing on the Styblinski-Tang function and Shekel function, which is recorded in the virtual library of simulation experiments\footnote{https://www.sfu.ca/~ssurjano/index.html}. Firstly, we investigate Styblinski-Tang function, i.e. 
\begin{equation}
\label{eqn:styblinski_tang}
f(x) = \frac{1}{2} \sum_{i=1}^{d} \left(x_{i}^{4} - 16x_{i}^{2} + 5x_{i}\right)
\end{equation}
to demonstrate the general performance of the algorithm~\ref{alg:global} to track the number of local minima and then find the local minima by algorithm~\ref{alg:combined}. 
\begin{figure}[H]
\begin{minipage}[t]{0.5\linewidth}
\centering
\includegraphics[width=3.2in]{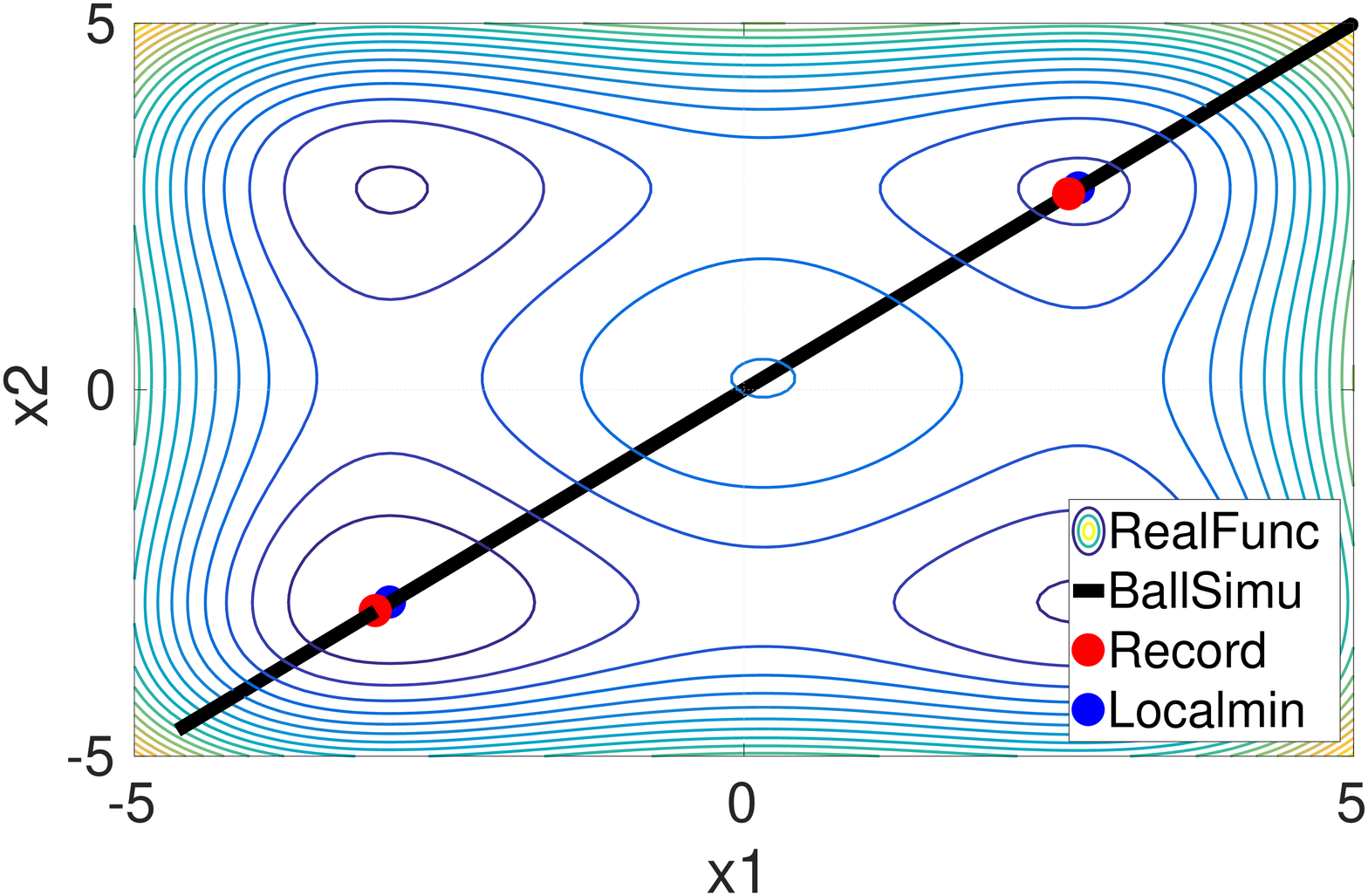}
\end{minipage}%
\begin{minipage}[t]{0.5\linewidth}
\centering
\includegraphics[width=3.2in]{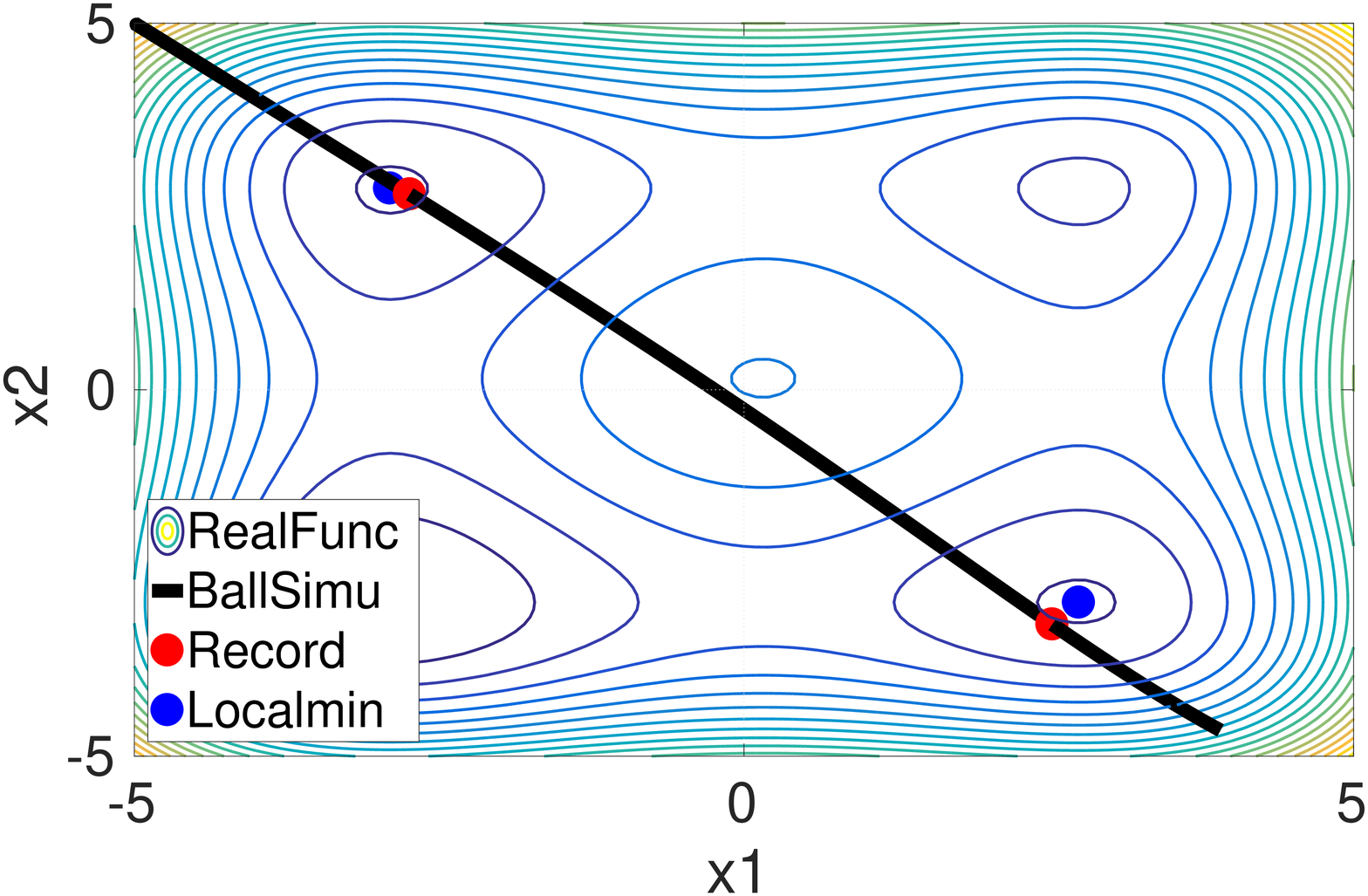}
\end{minipage}
\caption{Detecting the number of the local minima of 2-D Styblinski-Tang function by algorithm~\ref{alg:combined} with step length $h = 0.01$. The red points are recorded by algorithm~\ref{alg:global} and the blue point are the local minima by algorithm~\ref{alg:local}. The Left: The Initial Position $(5,5)$; The Right: The Initial Position $(-5,5)$.} 
\label{fig:styblinskitang}
\end{figure}

To the essential 1-D nonconvex Styblinski-Tang function of high dimension, we implement the algorithm~\ref{alg:combined} to obtain the precision of the global minima as below.
\begin{table}[H]
\begin{tabular}{lllll}
  \toprule
      &Local\_min1  &Local\_min2 &Local\_min3 &Local\_min4 \\
  \midrule
Initial Position      &(5,5,\ldots)                          & (5,5,\ldots)             & (5,-5,\ldots)          &(5,-5,\ldots) \\
       Position       &(2.7486,2.7486,\ldots)                & (-2.9035,-2.9035,\ldots) & (2.7486,-2.9035,\ldots)&(-2.9035,2.7486,\ldots) \\
Function Value        &-250.2945      &-391.6617 &-320.9781 &-320.9781 \\
  \bottomrule
\end{tabular}
\caption{The example for ten-dimensional Styblinski-Tang function from two initial positions.}
\label{label:1}
\end{table}

The global minima calculated at the position $(-2.9035,-2.9035,\ldots)$ is $-391.6617$ shown on the Table~\ref{label:1}. And the real global minima  at $(-2.903534,-2.903534,\ldots)$ is $-39.16599\times 10 = -391.6599$.

Furthermore, we demonstrate the numerical experiment from Styblinski-Tang function to more complex Shekel function 
\begin{equation}
\label{eqn:shekel}
f(x) = - \sum_{i=1}^{m} \left(\sum_{j=1}^{4}\left(x_{j} - C_{ji}\right)^{2} + \beta_{i}\right)^{-1}
\end{equation} 
where $$\beta = \frac{1}{10}\left(1, 2, 2, 4, 4, 6, 3, 7, 5, 5\right)^{T}$$
and
$$
C = \begin{pmatrix}
    4.0 & 1.0 & 8.0 & 6.0 & 3.0 & 2.0 & 5.0 & 8.0 & 6.0 & 7.0 \\
    4.0 & 1.0 & 8.0 & 6.0 & 7.0 & 9.0 & 3.0 & 1.0 & 2.0 & 3.6 \\
    4.0 & 1.0 & 8.0 & 6.0 & 3.0 & 2.0 & 5.0 & 8.0 & 6.0 & 7.0 \\
    4.0 & 1.0 & 8.0 & 6.0 & 7.0 & 9.0 & 3.0 & 1.0 & 2.0 & 3.6 
    \end{pmatrix}.
$$
\begin{enumerate}[label=\textbf{(\arabic*)}]
\item Case $m=5$, the global minima at $x^{\star} = (4,4,4,4)$ is $f(x^{\star}) = - 10.1532$.
\begin{enumerate}[label=\textbf{(\alph*)}]
\item From the position $(10,10,10,10)$, the experimental result with the step length $h=0.01$ and the iterative times $3000$ is shown as below

Detect Position (Algorithm~\ref{alg:global})
$$
\begin{pmatrix}
7.9879 & 6.0136 & 3.8525 & 6.2914 & 2.7818 \\
7.9958 & 5.9553 & 3.9196 & 6.2432 & 6.7434 \\
7.9879 & 6.0136 & 3.8525 & 6.2914 & 2.7818 \\
7.9958 & 5.9553 & 3.9196 & 6.2432 & 6.7434
\end{pmatrix}
$$
Detect value 
$$
\begin{pmatrix}
-5.0932 & -2.6551 & -6.5387 & -1.6356 & -1.7262
\end{pmatrix}
$$
Final position (Algorithm~\ref{alg:local}) 
$$
\begin{pmatrix}
7.9996 & 5.9987 & 4.0000 & 5.9987 & 3.0018 \\
7.9996 & 6.0003 & 4.0001 & 6.0003 & 6.9983 \\
7.9996 & 5.9987 & 4.0000 & 5.9987 & 3.0018 \\
7.9996 & 6.0003 & 4.0001 & 6.0003 & 6.9983
\end{pmatrix}
$$
Final value
$$
\begin{pmatrix}
-5.1008 & -2.6829 & -10.1532 & -2.6829 & -2.6305
\end{pmatrix}
$$
\item From the position $(3,3,3,3)$, the experimental result with the step length $h=0.01$ and the iterative times $1000$ is shown as below

Detect Position (Algorithm~\ref{alg:global})
$$
\begin{pmatrix}
3.9957 & 6.0140  \\
4.0052 & 6.0068  \\
3.9957 & 6.0140  \\
4.0052 & 6.0068 
\end{pmatrix}
$$
Detect value
$$
\begin{pmatrix}
-10.1443 & -2.6794 
\end{pmatrix}
$$
Final position (Algorithm~\ref{alg:local})
$$
\begin{pmatrix}
4.0000 & 5.9987  \\
4.0001 & 6.0003  \\
4.0000 & 5.9987 \\
4.0001 & 6.0003
\end{pmatrix}
$$
Final value
$$
\begin{pmatrix}
-10.1532 & -2.6829 
\end{pmatrix}
$$
\end{enumerate}
\item Case $m=7$, the global minima at $x^{\star} = (4,4,4,4)$ is $f(x^{\star}) = - 10.4029$.
\begin{enumerate}[label=\textbf{(\alph*)}]
\item From the position $(10,10,10,10)$, the experimental result with the step length $h=0.01$ and the iterative times $3000$ is shown as below

Detect Position (Algorithm~\ref{alg:global})
$$
\begin{pmatrix}
7.9879 & 6.0372 & 3.1798 & 5.0430 & 6.2216 & 2.6956 \\
8.0041 & 5.9065 & 3.8330 & 2.8743 & 6.2453 & 6.6837 \\
7.9879 & 6.0372 & 3.1798 & 5.0430 & 6.2216 & 2.6956 \\
8.0041 & 5.9065 & 3.8330 & 2.8743 & 6.2453 & 6.6837
\end{pmatrix}
$$
Detect value 
$$
\begin{pmatrix}
-5.1211 & -2.6312 & -0.9428 & -3.3093 & -1.8597 & -1.5108
\end{pmatrix}
$$
Final position  (Algorithm~\ref{alg:local})
$$
\begin{pmatrix}
7.9995 & 5.9981 & 4.0006 & 4.9945 & 5.9981 & 3.0006 \\
7.9996 & 5.9993 & 3.9996 & 3.0064 & 5.9993 & 7.0008 \\
7.9995 & 5.9981 & 4.0006 & 4.9945 & 5.9981 & 3.0006 \\
7.9996 & 5.9993 & 3.9996 & 3.0064 & 5.9993 & 7.0008
\end{pmatrix}
$$
Final value
$$
\begin{pmatrix}
-5.1288 & -2.7519 & -10.4029 & -3.7031 & -2.7519 & -2.7496
\end{pmatrix}
$$
\item From the position $(3,3,3,3)$, the experimental result with the step length $h=0.01$ and the iterative times $1000$ is shown as below

Detect Position (Algorithm~\ref{alg:global})
$$
\begin{pmatrix}
4.0593 & 3.0228  \\
3.9976 & 7.1782  \\
4.0593 & 3.0228  \\
3.9976 & 7.1782 
\end{pmatrix}
$$
Detect value
$$
\begin{pmatrix}
-9.7595 & -2.4073 
\end{pmatrix}
$$
Final position (Algorithm~\ref{alg:local})
$$
\begin{pmatrix}
4.0006 & 3.0006  \\
3.9996 & 7.0008  \\
4.0006 & 3.0006 \\
3.9996 & 7.0008
\end{pmatrix}
$$
Final value
$$
\begin{pmatrix}
-10.4029 & -2.7496
\end{pmatrix}
$$
\end{enumerate}
\item Case $m=10$, the global minima at $x^{\star} = (4,4,4,4)$ is $f(x^{\star}) = - 10.5364$.
\begin{enumerate}[label=\textbf{(\alph*)}]
\item From the position $(10,10,10,10)$, the experimental result with the step length $h=0.01$ and the iterative times $3000$ is shown as below

Detect Position (Algorithm~\ref{alg:global})
$$
\begin{pmatrix}
7.9977 & 5.9827 & 4.0225 & 2.7268 & 6.1849 & 6.2831 & 6.3929 \\
7.9942 & 6.0007 & 3.8676 & 7.3588 & 6.0601 & 3.2421 & 1.9394 \\
7.9977 & 5.9827 & 4.0225 & 2.7268 & 6.1849 & 6.2831 & 6.3929 \\
7.9942 & 6.0007 & 3.8676 & 7.3588 & 6.0601 & 3.2421 & 1.9394
\end{pmatrix}
$$
 Detect value 
$$
\begin{pmatrix}
-5.1741 & -2.8676 & -7.9230 & -1.5442 & -2.4650 &-1.3703 &-1.7895
\end{pmatrix}
$$
Final position (Algorithm~\ref{alg:local})
$$
\begin{pmatrix}
7.9995 & 5.9990 & 4.0007 & 3.0009 & 5.9990 & 6.8999 &5.9919 \\
7.9994 & 5.9965 & 3.9995 & 7.0004 & 5.9965 & 3.4916 &2.0224 \\
7.9995 & 5.9990 & 4.0007 & 3.0009 & 5.9990 & 6.8999 &5.9919 \\
7.9994 & 5.9965 & 3.9995 & 7.0004 & 5.9965 & 3.4916 &2.0224
\end{pmatrix}
$$
Final value
$$
\begin{pmatrix}
-5.1756 & -2.8712 & -10.5364 & -2.7903 & -2.8712 & -2.3697 & -2.6085
\end{pmatrix}
$$
\item From the position $(3,3,3,3)$, the experimental result with the step length $h=0.01$ and the iterative times $1000$ is shown as below

Detect Position (Algorithm~\ref{alg:global})
$$
\begin{pmatrix}
4.0812 & 3.0206  \\
3.9794 & 7.0173  \\
4.0812 & 3.0206  \\
3.9794 & 7.0173 
\end{pmatrix}
$$
Detect value
$$
\begin{pmatrix}
-9.3348 & -2.7819
\end{pmatrix}
$$
Final position (Algorithm~\ref{alg:local})
$$
\begin{pmatrix}
4.0007 & 3.0009  \\
3.9995 & 7.0004  \\
4.0007 & 3.0009 \\
3.9995 & 7.0004
\end{pmatrix}
$$
Final value
$$
\begin{pmatrix}
-10.5364 & -2.7903
\end{pmatrix}
$$
\end{enumerate}
\end{enumerate}

\section{Conclusion and Further Works}
\label{sec:conclusion}

Based on the view for understanding arithmetical complexity from analytical complexity in the seminal book~\citep{nesterov2013introductory} and the idea for viewing optimization from differential equation in the novel blog\footnote{http://www.offconvex.org/2015/12/11/mission-statement/} , we propose  some original algorithms based on Newton Second Law with the kinetic energy observable and controllable in the computational process firstly. Although our algorithm cannot fully solve the global optimization problem, or it is dependent on the trajectory path, this work introduces time-independent Hamilton system essentially to optimization such that it is possible that the global minima can be obtained. Our algorithms are easy to implement and own more rapid convergence rate.

For the theoretical view, the time-independent Hamilton system is  closer to nature and a lot of fundamental work have appeared in the previous century, such as KAM theory, Nekhoroshev estimate, operator spectral theory and so on~\citep{arnol2013mathematical,arnol?d2012geometrical}. Are these beautiful and essentially original work used to understand and improve the algorithm for optimization and machine learning? Also, to estimate the convergence rate,  the matrix containing the trigonometric function is hard to estimate. Some estimate for the trigonometric matrix based on spectral theory are proposed in~\citep{jitomirskaya2017arithmetic,liu2015anderson}. For the numerical scheme, we only exploit the simple first-order symplectic Euler method.  A lot of more efficient schemes, such as St\"{o}rmer-Verlet scheme, Symplectic Runge-Kutta scheme, order condition method and so on, are proposed on~\citep{hairer2006geometric}.  These schemes can make the algorithms in this paper more efficient and accurate.   For the optimization, the method we proposed is only about unconstrained problem. In the nature, the classical Newton Second law, or the equivalent expression --- Lagrange mechanics and Hamilton mechanics, is implemented on the manifold in the almost real physical world. In other word, a natural generalization is from unconstrained problem to constrained problem for our proposed algorithms. A more natural implementation is the geodesic descent in~\citep{luenberger1984linear}. Similar as the development of the gradient method from smooth condition to nonsmooth condition, our algorithms can be generalized to nonsmooth condition by the subgradient. For application, we will implement our algorithms to Non-negative Matrix Factorization, Matrix Completion and Deep Neural Network and speed up the training of the objective function. Meanwhile, we apply the algorithms proposed in this paper to the maximum likelihood estimator and maximum a posteriori estimator in statistics.

Starting from Newton Second Law, we implement only a simple particle in classical mechanics, or macroscopic world. A natural generalization is from the macroscopic world to the microscopic world. In the field of fluid dynamics, the Newton second Law is expressed by Euler equation, or more complex Navier-Stokes equation. An important topic from fluid dynamics is geophysical fluid dynamics~\citep{pedlosky2013geophysical,cushman2011introduction} , containing atmospheric science and oceanography. Especially, a key feature in the oceanography different from atmospheric science is the topography, which influence mainly vector field of the fluid. So many results have been demonstrated based on many numerical modeling , such as the classical POM\footnote{ http://ofs.dmcr.go.th/thailand/model.html}, HYCOM\footnote{https://hycom.org/}, ROMS\footnote{https://www.myroms.org/} and FVCOM\footnote{http://fvcom.smast.umassd.edu/}. A reverse idea is that if we view the potential function in black box is the topography, we observe the changing of the fluid vector field to find the number of local minima in order to obtain the global minima with a suitable initial vector field.  A more adventurous idea is to generalize the classical particle to the quantum particle. For quantum particle, the Newton second law is expressed by the energy form, that is from the view of Hamilton mechanics, which is the starting point for the proposed algorithm in this paper.  The particle appears in wave form in microscopic world. When the wave meets the potential barrier, the tunneling phenomena will appear. The tunneling phenomena still appear in high dimension~\citep{nakamura2013quantum}. It is very easy to observe the tunneling phenomena in the physical world. If the computer can be very easy to simulate the quantum world, we can find the global minima by binary section search. That is, if there exist tunneling phenomena in the upper level, continue to detect the upper level in the upper level, otherwise to go the lower level. In quantum world, it need only $\mathcal{O}(\log n)$ times to find global minima other than NP-hard.

\bibliographystyle{plainnat}
\bibliography{sigproc}

\newpage
\appendix

\end{document}